\newtheorem{lem}{Lemma}
\newtheorem{thm}[lem]{Theorem}
\numberwithin{equation}{section}
\theoremstyle{definition}
\theoremstyle{remark}
\DeclareMathOperator{\Li}{Li}
\DeclareMathOperator{\Mellin}{\mathcal{M}}
\DeclarePairedDelimiterX{\setm}[2]{\{}{\}}{#1\,\delimsize\vert\,\mathopen{}#2}
\let\abs\relax
\DeclarePairedDelimiter\abs{\lvert}{\rvert}%
\DeclarePairedDelimiter\floor{\lfloor}{\rfloor}
\DeclarePairedDelimiter\ceil{\lceil}{\rceil}
\newcommand{\N}{\mathbb{N}}
\newcommand{\Oh}{\mathcal{O}}
\newcommand{\dt}{\mathrm{d}t}
\title[A Central Limit Theorem for Integer Partitions into Small Powers]{A Central Limit Theorem for\\ Integer Partitions into Small Powers}
\author[G. F. Lipnik]{Gabriel F. Lipnik}
\address[G. F. Lipnik]{
  Graz University of Technology, Institute of Analysis and Number
  Theory, 8010~Graz, Austria}
\email{math@gabriellipnik.at}
\author[M. G. Madritsch]{Manfred G. Madritsch}
\address[M. G. Madritsch]{
  Université de Lorraine, IECL, CNRS, F-54000 Nancy, France}
\email{manfred.madritsch@univ-lorraine.fr}
\author[R. F. Tichy]{Robert F. Tichy}
\address[R. F. Tichy]{
  Graz University of Technology, Institute of Analysis and Number
  Theory, 8010~Graz, Austria}
\email{tichy@tugraz.at}
\subjclass[2020]{11P82, 05A17, 60F05}
\keywords{integer partitions, partition function, central limit theorem, saddle-point method, Mellin transform}
\date{\today}
\begin{document}

\begin{abstract}
  The study of the well-known partition function $p(n)$ counting the
  number of solutions to $n = a_{1} + \dots + a_{\ell}$ with integers
  $1 \leq a_{1} \leq \dots \leq a_{\ell}$ has a long history in
  combinatorics. In this paper, we study a variant, namely partitions
  of integers into
  \begin{equation*}
    n=\floor{a_1^\alpha}+\cdots+\floor{a_\ell^\alpha}
  \end{equation*}
  with $1\leq a_1< \cdots < a_\ell$ and some fixed
  $0 < \alpha < 1$. In particular, we prove a central limit theorem
  for the number of summands in such partitions, using the saddle-point method.
\end{abstract}

\maketitle

\section{Introduction}
For a positive integer $n$, let $f(n)$ denote the number of unordered
factorizations as products of integer factors greater than~$1$. Balasubramanian
and Luca~\cite{balasubramanian_luca2011:number_factorizations_integer}
considered the set
\[ \mathcal{F}(x)=\setm{m}{m\leq x,\, m=f(n)\text{ for some }n}.\]
In order to provide an upper bound for $\abs{\mathcal{F}(x)}$, they had to
analyse the number~$q(n)$ of partitions of~$n$ of the form
\[ n=\floor{\sqrt{a_1}}+\cdots+\floor{\sqrt{a_\ell}}\]
with integers $1\leq a_1\leq\cdots \leq a_\ell$, where $\floor{x}$ denotes the
integer part of $x$.

Chen and Li~\cite{chen_li2015:square_root_partition} proved a similar
result, and Luca and
Ralaivaosaona~\cite{luca_ralaivaosaona2016:explicit_bound_number}
refined the previous results to obtain the asymptotic formula
\[ q(n)\sim
Kn^{-8/9}\exp\biggl(\frac{6\zeta(3)^{1/3}}{4^{2/3}}n^{2/3}+\frac{\zeta(2)}{(4\zeta(3))^{1/3}}n^{1/3}\biggr),\]
where
\[K=\frac{(4\zeta(3))^{7/18}}{\pi
    A^2\sqrt{12}}\exp\biggl(\frac{4\zeta(3)-\zeta(2)^2}{24\zeta(3)}\biggr)\]
and $A$ is the Glaisher--Kinkelin constant.  Li and
Chen~\cite{li_chen2016:r_th_root, li_chen2018:r_th_root} extended the result to arbitrary
powers $0<\alpha<1$ not being of the form $\alpha=1/m$ for a positive
integer $m$. Finally, Li and Wu~\cite{li_wu2021:k_th_root} considered
the case of $\alpha=1/m$: They obtained a complete expansion along lines similar to the one in Tenenbaum, Wu and
Li~\cite{tenenbaum_wu_li2019:power_partitions_and} as well as in Debruyne
and
Tenenbaum~\cite{debruyne_tenenbaum2020:saddle_point_method_for_partitions}.

In the present paper, we take a different point of view. For fixed $0<\alpha<1$, we
consider the distribution of the length of restricted $\alpha$-partitions. A restricted
$\alpha$-partition of~$n$ is a representation of~$n$ of the form
\[ n=\floor{a_1^\alpha}+\cdots+\floor{a_\ell^\alpha}\] with
$1\leq a_1< a_2< \cdots < a_\ell$, and $\ell$ is called its length. In
contrast to the corresponding counting function~$\omega(n)$ in the
theory of primes, the problem of restricted partitions is rarely
discussed in the theory of integer partitions. A short mentioning can
be found in Erd\H{o}s and
Lehner~\cite{erdos_lehner1941:distribution_of_summands_in_partitions}. The
study of distinct components was introduced by
Wilf~\cite{wilf1983:problems_in_combinatorial_asymptotics}. First
considerations on the length of partitions of integers can be found in
Goh and
Schmutz~\cite{goh_schmutz1995:distinct_part_sizes_in_partitions}. Their
result was extended by
Schmutz~\cite{schmutz1994:part_sizes_of_partitions} to multivariate
cases under the Meinardus' scheme
(cf.~Meinardus~\cite{meinardus1954:meinardus_scheme}), and
Hwang~\cite{hwang2001:limit_theorems_number} provided an extended
version with weaker necessary conditions. Madritsch and
Wagner~\cite{madritsch_wagner2010:central_limit_theorem} finally
considered sets with digital restrictions, which lead to a Dirichlet
generating function having equidistant poles along a vertical line in
the complex plane. In the present paper, we use a similar method but
for the case of multiple poles on the real line.

\section{Main Result}
Let $0<\alpha<1$ be a fixed real number. We let $\Pi(n)$ denote the set of partitions of a
positive integer~$n$ into parts $\floor{a_j^{\alpha}}$ where each $a_j$
occurs at most once. These partitions are called (restricted) $\alpha$-partitions for
short.  Furthermore, let $q(n)=\abs{\Pi(n)}$ be the cardinality of the set
$\Pi(n)$.  Moreover, we let $\Pi(n,k)$ denote the subset of partitions $\Pi(n)$
whose length (number of summands) is $k$ and $q(n,k)=\abs{\Pi(n,k)}$ is its
cardinality.

In the present work, we consider the random variable~$\varpi_n$ counting the
number of summands in a random $\alpha$-partition of~$n$. The probability
distribution of $\omega_n$ is given by $\mathbb{P}(\omega_n = k)= q(n,k)/q(n)$. In
order to obtain a central limit theorem for~$\varpi_n$, we have to carefully
analyse the associated bivariate generating function $Q(z,u)$, which
is given by
\[ Q(z,u)=1+\sum_{n\geq 1}\sum_{k\geq 1} q(n,k)u^kz^n.\]
Furthermore, for a fixed integer $k \geq 1$, we let~$g(k)$ denote the number of integers $n \geq 1$ satisfying
$\floor{n^\alpha}=k$, i.e.,
\[
  g(k) \coloneqq \bigl\lceil (k+1)^{\beta}\bigr\rceil-\bigl\lceil k^{\beta}\bigr\rceil
\]
with $\beta\coloneqq 1/\alpha$. Then the following lemma holds.
\begin{lem}\label{lem:probability-generating-function}
  With the notation above, we have
  \begin{equation*}
    Q(z, u)
    =\prod_{k\geq1}\bigl(1+uz^k\bigr)^{g(k)}
    =1+\sum_{n\geq1}q(n)\mathbb{E}(u^{\varpi_n})z^n.
  \end{equation*}
  Furthermore, the mean length $\mu_n=\mathbb{E}(\varpi_n)$ and its variance
$\sigma_n^2=\mathbb{V}(\varpi_n)$ are given by
\[\mu_n=\frac{\left[z^n\right]Q_u(z,1)}{\left[z^n\right] Q(z,1)}
  \qq{and}
  \sigma_n^2=\frac{\left[z^n\right]Q_{uu}(z,1)}{\left[z^n\right] Q(z,1)}
  +\frac{\left[z^n\right]Q_u(z,1)}{\left[z^n\right] Q(z,1)}-\mu_n^2.\]
\end{lem}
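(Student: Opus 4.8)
The plan is to obtain the product formula directly from the combinatorial description of restricted $\alpha$-partitions, and then to read off the moment formulas by the standard \emph{differentiate in~$u$ and extract the coefficient of~$z^n$} routine.

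\emph{The product formula.} A restricted $\alpha$-partition of~$n$ of length~$k$ is, by definition, the same thing as a $k$-element set $S\subseteq\{1,2,3,\dots\}$ with $\sum_{a\in S}\floor{a^{\alpha}}=n$; note that $\floor{a^{\alpha}}\geq1$ for every $a\geq1$. Hence, letting~$z$ mark the size and~$u$ mark the length, each integer $a\geq1$ contributes independently a factor $1+uz^{\floor{a^{\alpha}}}$ --- corresponding to $a\notin S$ or $a\in S$ --- so that I would first record
\[
  1+\sum_{n\geq1}\sum_{k\geq1}q(n,k)u^{k}z^{n}=\prod_{a\geq1}\bigl(1+uz^{\floor{a^{\alpha}}}\bigr).
\]
Next I would group the factors according to the value of $\floor{a^{\alpha}}$: since $\floor{a^{\alpha}}=k$ is equivalent to $k^{\beta}\leq a<(k+1)^{\beta}$, that is, to $\lceil k^{\beta}\rceil\leq a\leq\lceil(k+1)^{\beta}\rceil-1$ for integers~$a$, exactly $g(k)=\lceil(k+1)^{\beta}\rceil-\lceil k^{\beta}\rceil$ integers~$a$ produce the part~$k$, and collecting the equal factors gives $\prod_{k\geq1}(1+uz^{k})^{g(k)}=Q(z,u)$. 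This rearrangement is legitimate at the level of formal power series in~$z$ with polynomial coefficients in~$u$, because $(1+uz^{k})^{g(k)}=1+\Oh(z^{k})$ and therefore, modulo $z^{N+1}$, only the finitely many factors with $k\leq N$ matter.

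\emph{From $q(n,k)$ to $\varpi_n$ and the moments.} By the very definition $\mathbb{P}(\varpi_n=k)=q(n,k)/q(n)$ one has $\sum_{k\geq1}q(n,k)u^{k}=q(n)\,\mathbb{E}(u^{\varpi_n})$, and substituting this into the double sum above yields the second stated identity. Setting $u=1$ gives $[z^n]Q(z,1)=q(n)$, while differentiating $Q(z,u)=1+\sum_{n\geq1}q(n)\mathbb{E}(u^{\varpi_n})z^{n}$ once, respectively twice, with respect to~$u$ and then putting $u=1$ gives $[z^n]Q_u(z,1)=q(n)\mathbb{E}(\varpi_n)$ and $[z^n]Q_{uu}(z,1)=q(n)\mathbb{E}(\varpi_n(\varpi_n-1))$. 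Dividing by $q(n)=[z^n]Q(z,1)$ produces the formula for~$\mu_n$ immediately, and then, using $\sigma_n^2=\mathbb{E}(\varpi_n^2)-\mu_n^2=\mathbb{E}(\varpi_n(\varpi_n-1))+\mathbb{E}(\varpi_n)-\mu_n^2$, also the formula for~$\sigma_n^2$. Since the whole argument is essentially bookkeeping, the only place asking for a little care is the justification in the previous paragraph that the product over~$a$ may be rearranged into the product over~$k$ and that both sides are well-defined power series; I expect no real obstacle beyond that.
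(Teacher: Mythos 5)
Your proposal is correct and follows essentially the same route as the paper: the product formula comes from counting, via $g(k)$, how many integers $a$ yield the part $\floor{a^{\alpha}}=k$, and the moment formulas follow by differentiating in $u$, setting $u=1$, and extracting $[z^n]$. Your write-up is merely more explicit (the intermediate product over $a$ and the factorial-moment identity for the variance, which the paper leaves as ``follows similarly''), but no new idea is involved.
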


\begin{proof}
By the definition of $g(k)$, we have $\floor{a_{j}^\alpha}=k$ for exactly $g(k)$ different integers~$a_{j}$. Thus, it follows that
\[Q(z,u)=\prod_{k\geq1}\bigl(1+uz^k\bigr)^{g(k)}.\]
Furthermore, it holds that
\begin{align*}
  Q(z,u) 
        &=1+\sum_{n\geq 1}\sum_{k\geq 1}q(n,k)u^kz^n\\
        &=1+\sum_{n\geq 1}q(n)\sum_{k\geq 1}\frac{q(n,k)u^k}{q(n)} z^n\\
        &=1+\sum_{n\geq 1}q(n)\mathbb{E}(u^{\varpi_n}) z^n.
\end{align*}

Next, we turn our attention to the mean length $\mu_n$. For the derivative of~$Q$ with respect to~$u$ we obtain 
\[ Q_u(z,u)=1+\sum_{n\geq 1}\sum_{k\geq 1}k q(n,k)u^{k-1}z^n.\]
Thus, it follows that
\[ \frac{[z^n]Q_u(z,1)}{[z^n]Q(z,1)}
=\frac{\sum_{k\geq 1}k q(n,k)}{\sum_{k\geq 1}q(n,k)}
=\frac{\sum_{k\geq 1}k q(n,k)}{q(n)}
=\sum_{k\geq 1}k \frac{q(n,k)}{q(n)}=\mathbb{E}(\varpi_n).\]
The equation for the variance follows similarly.
\end{proof}

Now we can state the main theorem of this work as follows.
\begin{thm}
  \label{thm:main}
  Let $0<\alpha<1$ and let $\varpi_n$ be the random variable counting
  the number of summands in a random partition of $n$ into
  $\alpha$-powers. Then $\varpi_n$ is asymptotically normally
  distributed with mean $\mathbb{E}(\varpi_n)\sim\mu_n$ and variance
  $\mathbb{V}(\varpi_n)\sim\sigma_n^2$, i.e.,
  \[
    \mathbb{P}\left(\frac{\varpi - \mu_n}{\sigma_n}< x\right)
    =\frac{1}{\sqrt{2\pi}}\int_{-\infty}^x e^{-t^2/2}\dd t+o(1),
  \]
  uniformly for all $x$ as $n\to\infty$. The mean~$\mu_n$ and the
  variance~$\sigma_n^2$ are given by
  \begin{equation}\label{mu}
    \mu_n = \sum_{k\geq1}\frac{g(k)}{e^{\eta k}+1} \sim c_{1}n^{1/(\alpha + 1)}
  \end{equation}
  and
  \begin{equation}\label{sigma}
    \sigma_n^2 = \sum_{k\geq1}\frac{g(k)e^{\eta k}}{(e^{\eta k}+1)^2}
      -\frac{\left(\sum_{k\geq1}\frac{g(k)ke^{\eta k}}{(e^{\eta k}+1)^2}\right)^2}
      {\sum_{k\geq1}\frac{g(k)k^2e^{\eta k}}{(e^{\eta k}+1)^2}}
      \sim c_{2}n^{1/(\alpha + 1)}
  \end{equation}
  where $\eta$ is the implicit solution of
  \[
    n=\sum_{k\geq1}\frac{k}{e^{\eta k}+1}.
  \]
  Explicit formul\ae{} for the occurring constants~$c_{1}$
  and~$c_{2}$ are given in~\eqref{eq:mu-explicit}
  and~\eqref{eq:sigma-explicit}, respectively.

  Finally, the tails of the distribution satisfy the
  exponential bounds
  \begin{equation*}
    \mathbb{P}\Bigl(\frac{\varpi - \mu_n}{\sigma_n}\geq x\Bigr)
    \leq\begin{cases}
      e^{-x^2/2}\left(1+\Oh((\log n)^{-3})\right) &\text{if }0\leq x\leq n^{1/(6\alpha+6)}/\log n,\\
      e^{-n^{1/(6\alpha+6)}x/2}\left(1+\Oh((\log n)^{-3})\right) &\text{if }x\geq n^{1/(6\alpha+6)}/\log n,
    \end{cases}
  \end{equation*}
  and the analogous inequalities also hold for $\mathbb{P}\bigl(\frac{\varpi - \mu_n}{\sigma_n}\leq -x\bigr)$.
\end{thm}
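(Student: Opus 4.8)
The plan is to use the saddle-point method applied to the bivariate generating function $Q(z,u)=\prod_{k\geq1}(1+uz^k)^{g(k)}$ from Lemma~\ref{lem:probability-generating-function}, following the classical Hwang--Meinardus scheme adapted to the present setting where the associated Dirichlet series has multiple poles on the real line. Write $\Lambda(s)=\sum_{k\geq1}g(k)k^{-s}$; since $g(k)=\lceil(k+1)^\beta\rceil-\lceil k^\beta\rceil\sim\beta k^{\beta-1}$ with $\beta=1/\alpha>1$, Euler--Maclaurin summation together with the Mellin transform identity $\log\bigl(1+e^{-t}\bigr)=\frac{1}{2\pi i}\int_{(c)}\Gamma(s)\,(1-2^{1-s})\zeta(s)\,t^{-s}\,\mathrm{d}s$ shows that $\Lambda(s)$ continues meromorphically to a half-plane with its rightmost pole at $s=\beta$ and a further pole at $s=1$ (both simple), so that $\log Q(e^{-\tau},1)=\sum_{k\geq1}g(k)\log(1+e^{-k\tau})$ admits an asymptotic expansion as $\tau\to0^+$ of the shape $A_0\tau^{-\beta}+A_1\tau^{-1}+\cdots$ with explicitly computable constants $A_0=\Gamma(\beta)\Lambda^*(\beta)(1-2^{1-\beta})\zeta(\beta)$-type coefficients.

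The next step is to introduce the two-parameter saddle point. For $u=e^{v}$ near $1$, set $\Psi(\tau,v)=\log Q(e^{-\tau},e^{v})=\sum_{k\geq1}g(k)\log\bigl(1+e^{v-k\tau}\bigr)$, and determine $\tau=\tau(n,v)$ from the saddle-point equation $n=-\partial_\tau\Psi(\tau,v)=\sum_{k\geq1}\frac{g(k)k}{e^{k\tau-v}+1}$; at $v=0$ this is exactly the equation $n=\sum_{k\geq1}k/(e^{\eta k}+1)$ defining $\eta$ in the theorem, and one gets $\eta\asymp n^{-1/(\beta+1)}=n^{-\alpha/(\alpha+1)}$ by balancing $A_0\beta\tau^{-\beta-1}\sim n$. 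By Cauchy's formula,
\[
  q(n)\,\mathbb{E}\bigl(u^{\varpi_n}\bigr)=[z^n]Q(z,u)=\frac{1}{2\pi i}\oint \frac{Q(z,u)}{z^{n+1}}\,\mathrm{d}z,
\]
and one shows that the dominant contribution comes from a short arc $|\arg z|\le\tau^{1+\delta}$ around $z=e^{-\tau}$, the complementary arc being exponentially smaller because $g(k)$ is regularly distributed (this requires a minor-arc estimate of Meinardus type, exploiting that $g(k)$ is essentially a smooth power of $k$ and in particular is supported on a set of positive density so that $\mathrm{Re}\,\log(1+ue^{-k\tau})$ drops by a definite amount away from the real axis). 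On the major arc a quadratic Taylor expansion of $\Psi$ in $\arg z$ yields the Gaussian integral, giving
\[
  [z^n]Q(z,e^v)\sim \frac{\exp\bigl(\Psi(\tau,v)+n\tau\bigr)}{\sqrt{2\pi\,\partial_{\tau}^2\Psi(\tau,v)}}
\]
uniformly for $v$ in a shrinking neighbourhood of $0$ (say $|v|\le n^{-1/(3\beta+3)}\log n$), with an error factor $1+O\bigl((\log n)^{-3}\bigr)$ coming from the cubic term in the expansion.

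The probabilistic conclusion then follows from a quasi-powers / Quételet argument: dividing by the $v=0$ case gives
\[
  \mathbb{E}\bigl(e^{v\varpi_n}\bigr)=\exp\bigl(\Psi(\tau,v)-\Psi(\eta,0)+n(\tau-\eta)\bigr)\bigl(1+O((\log n)^{-3})\bigr),
\]
and expanding the exponent to second order in $v$ (using the saddle-point relation to kill the first-order cross terms, which is precisely where the Legendre-type correction $-\bigl(\sum g(k)ke^{\eta k}/(e^{\eta k}+1)^2\bigr)^2\big/\sum g(k)k^2e^{\eta k}/(e^{\eta k}+1)^2$ in~\eqref{sigma} appears) shows $\log\mathbb{E}(e^{v\varpi_n})=\mu_n v+\tfrac12\sigma_n^2 v^2+O(|v|^3 n^{1/(\alpha+1)})$ with $\mu_n$, $\sigma_n^2$ as stated; Curtiss's theorem (or Lévy continuity, after substituting $v=it/\sigma_n$) then gives the CLT with the stated uniformity, while keeping $v$ real and optimising over $v$ up to the edge of the validity range yields the two-regime exponential tail bounds via the standard Chernoff estimate $\mathbb{P}(\varpi_n\ge\mu_n+x\sigma_n)\le\exp(-vx\sigma_n+\tfrac12\sigma_n^2v^2+\dots)$. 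The asymptotics $\mu_n\sim c_1 n^{1/(\alpha+1)}$, $\sigma_n^2\sim c_2 n^{1/(\alpha+1)}$ and the explicit constants $c_1$, $c_2$ in~\eqref{eq:mu-explicit}–\eqref{eq:sigma-explicit} are then read off from the Mellin expansion of $\Psi$ at $v=0$ and its $\tau$-derivatives, using $\eta\sim(A_0\beta/n)^{1/(\beta+1)}$.

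The main obstacle will be establishing the uniformity of the saddle-point estimate jointly in $n$ and in the auxiliary variable $v$ over a range wide enough to reach the edge $x\asymp n^{1/(6\alpha+6)}/\log n$ needed for the tail bounds: one must control how the saddle $\tau(n,v)$ and the second derivative $\partial_\tau^2\Psi$ vary with $v$, and verify that the minor-arc bound and the cubic remainder estimate both survive with the claimed error $O((\log n)^{-3})$ throughout that range. This is where the irregularity of $g(k)$ — it is a difference of ceiling functions, not literally $\beta k^{\beta-1}$ — must be handled carefully, replacing $g(k)$ by its smooth approximation and estimating the resulting error in $\Lambda(s)$ along vertical lines.
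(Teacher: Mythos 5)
Your proposal follows essentially the same route as the paper: the bivariate product $Q(z,u)=\prod_{k\geq1}(1+uz^k)^{g(k)}$, a saddle point $r(n,u)$ determined by $n=\sum_{k\geq1}kg(k)/(u^{-1}e^{kr}+1)$, a major/minor-arc split with a Meinardus/Li--Chen-type minor-arc estimate, Mellin-transform (converse-mapping) asymptotics for $\log Q$ and its $\tau$- and $u$-derivatives, a quasi-powers expansion of the moment-generating function combined with Curtiss' theorem, and Chernoff bounds for the two-regime tail estimates, which is exactly the paper's argument. The only slip is descriptive: since $g(k)=\sum_{\nu=1}^{m}\binom{\beta}{\nu}k^{\beta-\nu}+\Oh(1)$, the associated Dirichlet series has simple poles at $s=\beta,\beta-1,\dots,\beta-m+1$ (a string of poles on the real line, which is the point emphasized in the paper), rather than just at $s=\beta$ and $s=1$; this does not affect the leading asymptotics of $\mu_n$, $\sigma_n^2$ or the central limit theorem.
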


This result fits into the series of other results on partitions in integers of the form
$\lfloor k^\alpha\rfloor$ with $k\geq1$. In particular, if $\alpha=1$, then we
have the classic case of partitions and Erd\H{o}s and
Lehner~\cite{erdos_lehner1941:distribution_of_summands_in_partitions} showed that
$\mu_n\sim cn^{1/2}$. For  $\alpha>1$, not every integer has a
representation of the form $\lfloor k^\alpha\rfloor$ and there are gaps in the set $\setm{\floor{k^{\alpha}}}{k\in\N}$. This led Hwang~\cite{hwang2001:limit_theorems_number} to the result
$\mu_n\sim cn^{1/(\alpha+1)}$. Consequently, our result $\mu_n\sim c_1n^{1/(\alpha+1)}$ seems to be a natural extension
of these results.

One of the main difficulties of the case $0<\alpha<1$ lies in the special
structure of the function~$g(k)$. In particular, if $\alpha=1/m$ with $m\geq2$ being
an integer, then the parts of the partitions are $m$th roots and $g(k)$ is given by the polynomial
\[g(k)=(k+1)^m - k^m=\sum_{r=0}^{m-1}\binom{m}{r}k^r.\] However, in the general
case of $\alpha\not\in\mathbb{Q}$, we have an additional error term (see~\eqref{eq:g-k-with-error}) of which no explicit form
is known. This makes the analysis more involved.

Finally we want to mention that a local version of this central limit theorem is
the topic of a subsequent project. In particular, it seems that the above
mentioned error term of the function $g(k)$ needs further considerations in this
case.

\section{Main Idea, Outline and Tools for the Proof}
\label{sec:outline}

The proof of our main theorem consists of analytic and probabilistic
parts. In the analytic part, we use Mellin transform and the
saddle-point method. The probabilistic part is based on the use of
Curtiss' theorem for moment-generating functions. Before we give the
details of the proof, this section is dedicated to give an overview of
the main techniques and tools.

We first note that the central limit theorem for the random variable $\varpi_n$
is equivalent to the fact that the normalized moment-generating function
$M_n(t)=\mathbb{E}(e^{(\varpi_n-\mu_n)t/\sigma_n})$ tends to $e^{t^2/2}$ for
$t\to\infty$. Consequently, we will show this limit. Furthermore, the mentioned
tail estimates can be obtained by Chernov's bound.

Since $\mu_n/\sigma_n$ is constant, we have
$\mathbb{E}(e^{(\varpi_n-\mu_n)t/\sigma_n})=e^{-\mu_n/\sigma_n}\mathbb{E}(e^{\varpi_nt/\sigma_n})$.
We recall that by Lemma~\ref{lem:probability-generating-function}, the
probability-generating function $\mathbb{E}(u^{\varpi_n})$ is given by
\[\mathbb{E}(u^{\varpi_n})=\frac{[z^n]Q(z,u)}{[z^n]Q(z,1)}.\]
In other words, it is sufficient for our purpose to obtain the coefficient
of~$z^n$ in~$Q(z,u)$.  By Cauchy's integral formula, we derive
\[
  Q_n(u)\coloneqq [z^n]Q(z,u)
  =\frac1{2\pi i}\oint_{\left| z\right|=e^{-r}}
  z^{-n-1}Q(z,u)\dd{z}.
\]
A standard transformation yields for $r > 0$ that
\begin{equation}
  \label{integral:representation}
  Q_n(u) = \frac{e^{nr}}{2\pi}\int_{-\pi}^{\pi}\exp\bigl(int+f(r+it,u)\bigr)\dd{t}
\end{equation}
with
\[
  f(\tau,u)\coloneqq\log
  Q\left(e^{-\tau},u\right)=\sum_{k\geq 1}
  g(k)\log\bigl(1+ue^{-k\tau}\bigr).
\]

For the integral in~\eqref{integral:representation}, we use the well-known
saddle-point method, also known as the method of steepest decent. The
main application of this method is to obtain estimates for integrals of the form
\[\int_{-\pi}^\pi e^{g(r+it)}\mathrm{d}t\]
for some suitable function~$g$. We choose $t_n>0$ in order to split the
integral up into two parts, one near the positive real axis and the other one
for the rest, i.e.,
\begin{gather}\label{integral:split}
  \int_{-\pi}^\pi e^{g(r+it)}\dd{t}
  =\int_{\left| t\right|\leq t_n} e^{g(r+it)}\dd{t}
  +\int_{t_n<\left| t\right| \leq \pi} e^{g(r+it)}\dd{t}.
\end{gather}
  
For the second integral, we compare the contribution of the integrand with the
contribution from the real line, i.e., we estimate $e^{g(r+it)-\Re(g(r))}$. This will
contribute to the error term.

For the first integral in~\eqref{integral:split}, we use a third order Taylor
expansion of $g(r+it)$ around $t=0$, which is
\begin{equation}
  \label{eq:taylor}
  g(r+it)=g(r) + it
  g'(r)-\frac{t^2}2g''(r)+\Oh\Bigl(\sup_t\left|t^3g'''(r+it)\right|\Bigr).
\end{equation}
Now we
choose $r$ such that the first derivative $g'(r)$ vanishes. Then in the
integral we are left with
\[\int_{\left| t\right|<t_n} e^{g(r+it)}\dd{t}
=e^{g(r)}\int_{\left| t\right|<t_n} e^{-\frac{t^2}{2}g''(r)}\Bigl(1+\Oh\Bigl(\sup_t
\left|t^3g'''(r+it)\right|\Bigr)\Bigr)\dd{t}.\]

Now the integrand is the one of a Gaussian integral and so we add the
missing part. The Gaussian integral contributes to the main part, and we need to
analyse $g''(r)$ and $g'''(r)$ in order to show that all our
transformations and estimates are valid. For
more details on the saddle-point method, we refer the interested reader to Flajolet and
Sedgewick~\cite[Chapter~VIII]{flajolet_sedgewick2009:analytic_combinatorics}.

The estimates for $g''(r)$ and $g'''(r)$ are based on singular analysis using the well-known
Mellin transform.
The Mellin transform $h^{*}(s)$ of a function~$h$ is defined by
\[ h^*(s)=\Mellin[h;s]=\int_0^\infty h(t)t^{s-1}\dd{t}.\]
The most important property for our considerations is the so
called rescaling rule, which is given by
\[\Mellin\biggl[\sum_{k}\lambda_kh(\mu_kx);s\biggr]=\biggl(\sum_{k}\frac{\lambda_k}{\mu_k^s}\biggr)h^*(s);\]
see~\cite[Theorem 1]{flajolet_gourdon_dumas1995:mellin_transforms_and}.  This
provides a link between a generating function and its Dirichlet generating
function. For a detailed account on this integral transformation, we refer the
interested reader to the work of Flajolet, Gourdon and
Dumas~\cite{flajolet_gourdon_dumas1995:mellin_transforms_and} and to the work
of Flajolet, Grabner, Kirschenhofer, Prodinger and Tichy~\cite{flajolet_grabner_et_al1994:mellin_transforms}.

Let $\delta>0$. Throughout the rest of our paper we assume $\delta\leq u\leq
\delta^{-1}$ and by ``uniformly in $u$'' we always mean ``uniformly as
$\delta\leq u\leq \delta^{-1}$''. The following theorem concerning the Mellin
transform will be helpful multiple times in the proof of our main result.
\begin{thm}[{Converse Mapping~\cite[Theorem
4]{flajolet_gourdon_dumas1995:mellin_transforms_and}}]\label{fgd:thm4}
Let $f(x)$ be continuous in $(0,+\infty)$ with Mellin transform $f^*(s)$ having
a nonempty fundamental strip $\langle\alpha,\beta\rangle$.
Assume that $f^*(s)$ admits a meromorphic continuation to the strip $\langle
\gamma, \beta, \rangle$ for some $\gamma<\alpha$ with a finite number of poles
there, and is analytic on $\Re(s)=\gamma$. Assume also that there exists a real
number $\eta\in(\alpha,\beta)$ such that
\begin{gather}\label{fgd:eq23}
  f^*(s)=\Oh(\abs{s}^{-r})
\end{gather}
with $r > 1$ as $\abs{s}\to\infty$ in $\gamma\leq\Re(s)\leq\eta$. If $f^*(s)$ admits the
singular expansion
\begin{gather}\label{fgd:eq24}
  f^*\asymp \sum_{(\xi,k)\in A}d_{\xi,k}\frac{1}{(s-\xi)^k}
\end{gather}
for $s\in \langle\gamma,\alpha\rangle$, then an asymptotic expansion of $f(x)$
at $0$ is given by
\[f(x)=\sum_{(\xi,k)\in A}d_{\xi,k}\left(\frac{(-1)^{k-1}}{(k-1)!}x^{-\xi}(\log x)^k\right)+\Oh(x^{-\gamma}).\]
\end{thm}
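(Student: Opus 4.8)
The plan is to recover $f(x)$ from $f^{*}(s)$ by Mellin inversion along a vertical line inside the fundamental strip, and then to push that line leftwards to $\Re(s)=\gamma$; the poles of $f^{*}$ swept in between produce the main term, while the shifted integral is absorbed into the error $\Oh(x^{-\gamma})$. This is the standard ``closing the contour to the left'' argument, and the three hypotheses of the theorem are exactly what is needed to make the three steps rigorous.

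First I would set up the inversion. Since $\eta\in(\alpha,\beta)$ lies in the open fundamental strip and~\eqref{fgd:eq23} gives $f^{*}(\eta+it)=\Oh(\abs{t}^{-r})$ with $r>1$, the map $t\mapsto f^{*}(\eta+it)$ is absolutely integrable; together with the continuity of $f$ this is precisely the hypothesis of the Mellin inversion theorem (\cite[Theorem~2]{flajolet_gourdon_dumas1995:mellin_transforms_and}), whence
\[
  f(x)=\frac{1}{2\pi i}\int_{\eta-i\infty}^{\eta+i\infty}f^{*}(s)\,x^{-s}\dd{s}\qquad(x>0).
\]
Next I would integrate $f^{*}(s)x^{-s}$ around the positively oriented rectangle with vertices $\eta\pm iT$ and $\gamma\pm iT$. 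Since $f^{*}$ has no poles in the fundamental strip and $\gamma<\alpha<\eta$, for $T$ large the only poles enclosed are the finitely many points $\xi$ occurring in~\eqref{fgd:eq24}, and the residue theorem gives
\[
  \frac{1}{2\pi i}\int_{\eta-iT}^{\eta+iT}f^{*}(s)x^{-s}\dd{s}
  =\sum_{\xi}\operatorname{Res}_{s=\xi}\bigl(f^{*}(s)x^{-s}\bigr)
  +\frac{1}{2\pi i}\int_{\gamma-iT}^{\gamma+iT}f^{*}(s)x^{-s}\dd{s}+R_{T}(x),
\]
where $R_{T}(x)$ is the contribution of the two horizontal sides. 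On these, $\abs{f^{*}(s)}=\Oh(T^{-r})$ uniformly for $\gamma\le\Re(s)\le\eta$ by~\eqref{fgd:eq23}, while $\abs{x^{-s}}=x^{-\Re(s)}\le\max(x^{-\gamma},x^{-\eta})$ is bounded for fixed $x$; as the sides have fixed length $\eta-\gamma$ and $r>1$, this forces $R_{T}(x)=\Oh(T^{-r})\to0$ as $T\to\infty$. Hence
\[
  f(x)=\sum_{\xi}\operatorname{Res}_{s=\xi}\bigl(f^{*}(s)x^{-s}\bigr)
  +\frac{1}{2\pi i}\int_{\gamma-i\infty}^{\gamma+i\infty}f^{*}(s)x^{-s}\dd{s}.
\]

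It remains to identify both pieces. For the integral on $\Re(s)=\gamma$, the analyticity of $f^{*}$ on that line and $f^{*}(\gamma+it)=\Oh(\abs{t}^{-r})$ with $r>1$ yield absolute convergence and
\[
  \abs*{\frac{1}{2\pi i}\int_{\gamma-i\infty}^{\gamma+i\infty}f^{*}(s)x^{-s}\dd{s}}
  \le x^{-\gamma}\,\frac{1}{2\pi}\int_{-\infty}^{\infty}\abs{f^{*}(\gamma+it)}\dd{t}=\Oh(x^{-\gamma}).
\]
For the residues, fix a pole $\xi$ with principal part $\sum_{k}d_{\xi,k}(s-\xi)^{-k}$ (the sum over those $k$ with $(\xi,k)\in A$); subtracting it leaves a function analytic at $\xi$, which contributes nothing to the residue. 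Writing $x^{-s}=x^{-\xi}e^{-(s-\xi)\log x}=x^{-\xi}\sum_{j\ge0}\frac{(-\log x)^{j}}{j!}(s-\xi)^{j}$, the residue of $d_{\xi,k}(s-\xi)^{-k}x^{-s}$ at $s=\xi$ equals $d_{\xi,k}$ times the coefficient of $(s-\xi)^{k-1}$, namely $d_{\xi,k}\frac{(-1)^{k-1}}{(k-1)!}x^{-\xi}(\log x)^{k-1}$. Summing over all $(\xi,k)\in A$ reproduces the main term of the statement. Finally, since $\gamma<\Re(\xi)$ for every $\xi\in A$, we have $x^{-\gamma}=o(x^{-\xi})$ as $x\to0^{+}$, so this is a genuine asymptotic expansion at $0$ with remainder $\Oh(x^{-\gamma})$.

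I expect the main obstacle to be the contour shift: one must be certain that the horizontal truncation error $R_{T}(x)$ actually vanishes, which is \emph{exactly} what the uniform polynomial-decay hypothesis~\eqref{fgd:eq23} with exponent $r>1$ buys, and one must carefully check that the poles crossed between $\Re(s)=\eta$ and $\Re(s)=\gamma$ are precisely those listed in~\eqref{fgd:eq24} — this uses that $f^{*}$ is pole-free in the fundamental strip, has only finitely many poles in $\langle\gamma,\beta\rangle$, and is analytic on $\Re(s)=\gamma$. The inversion formula, the crude bound on the line $\Re(s)=\gamma$, and the residue computation are then routine.
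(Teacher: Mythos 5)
Your argument is correct and is exactly the standard proof of this result; note, though, that the paper does not prove this theorem at all — it is quoted as an external tool from Flajolet--Gourdon--Dumas (their Theorem~4), so there is no internal proof to compare against. Your three steps (Mellin inversion on the line $\Re(s)=\eta$, justified by the decay~\eqref{fgd:eq23} with $r>1$; the rectangular contour shift to $\Re(s)=\gamma$ with the horizontal contributions vanishing as $T\to\infty$; the residue computation and the crude bound $\Oh(x^{-\gamma})$ on the left line) are precisely the ingredients of the original proof. One small but real discrepancy: your residue calculation correctly yields the factor $\frac{(-1)^{k-1}}{(k-1)!}x^{-\xi}(\log x)^{k-1}$, which matches the theorem as stated in Flajolet--Gourdon--Dumas, whereas the statement as reproduced in the paper writes $(\log x)^{k}$ — a typo in the paper; so your claim that the computation ``reproduces the main term of the statement'' is true only after this correction. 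Also, strictly speaking the poles collected should include any lying on the line $\Re(s)=\alpha$ (the fundamental strip is open), and when you write $x^{-\gamma}=o(x^{-\xi})$ you should say $x^{-\gamma}=o(x^{-\Re(\xi)})$ since $\xi$ may be complex; both are cosmetic.
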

Roughly speaking, converse mapping says that the asymptotic expansion of~$f$ is
determined by the poles of its Mellin transform.

In the present paper, we apply this converse mapping to products of the Gamma
function~$\Gamma(s)$, the Riemann zeta function~$\zeta(s)$ and the polylogarithm
$\Li_s(z)$, which are defined by
\[\Gamma(s)=\int_0^\infty e^{-x}x^{s-1}\dd{x},\quad
  \zeta(s)=\sum_{n\geq1} n^{-s}\qq{and} \Li_s(z)=\sum_{n\geq1}
  \frac{z^n}{n^s},\]
respectively. To apply converse mapping, we need to show
that~\eqref{fgd:eq23} as well as~\eqref{fgd:eq24} are both fulfilled for these
functions.  Stirling's formula yields for the Gamma function that
\[\abs{\Gamma(x+iy)}=\sqrt{2\pi}\abs{y}^{x-1/2}e^{-\pi\abs{y}/2}\left(1+\Oh_{a,b}(1/y)\right)\]
for $a\leq x\leq b$ and $\abs{y}\geq1$.
Furthermore, the Riemann zeta function satisfies
\[\zeta(x+iy)\ll_{a,b}1+\abs{y}^A\] for
suitable $A=A(a,b)$, $a\leq x\leq b$ and $\abs{y}\geq1$. For the polylogarithm we follow the
ideas of Flajolet and Sedgewick~\cite[VI.8]{flajolet_sedgewick2009:analytic_combinatorics}.
This is a good application of the converse mapping, so we want to reproduce it
here: First of all, let $w=-\log z$ and define the function
\[\Lambda(w):=\Li_{\alpha}(e^{-w})=\sum_{n\geq1}\frac{e^{-nw}}{n^\alpha}.\]
This is a harmonic sum and so we apply Mellin transform theory. The
Mellin transform of~$\Lambda(w)$ satisfies
\[
  \Lambda^*(s)
  =\int_{0}^\infty \sum_{n\geq1}\frac{e^{-nw}}{n^\alpha}w^{s-1}\mathrm{d}w
  =\sum_{n\geq1}\frac{1}{n^{\alpha+s}} \int_{0}^\infty e^{-v}v^{s-1}\mathrm{d}v
  =\zeta(s+\alpha)\Gamma(s)
\]
for $\Re(s)>\max(0,1-\alpha)$.
The Gamma function has simple poles in the negative integers and
$\zeta(s+\alpha)$ has a simple pole in $1-\alpha$. Thus, the application of converse mapping
(Theorem~\ref{fgd:thm4}) yields
\[
  \Li_\alpha(z)=\Gamma(1-\alpha)w^{\alpha-1}
    +\sum_{j\geq0}\frac{(-1)^j}{j!}\zeta(\alpha-j)w^j
  \]
  with
  \[w=\sum_{\ell\geq1}\frac{(1-z)^\ell}{\ell}.\]
Using these estimates, we obtain an asymptotic formula for $Q_n(u)$ of the form
\[
  Q_n(u)=\frac{e^{nr+f(u,r)}}{\sqrt{2\pi B}}\bigl(1+\Oh\bigl(r^{2\beta/7}\bigr)\bigr).
\]
Recall that
\[M_n(t)=\mathbb{E}(e^{(\varpi_n-\mu_n)t/\sigma_n})
=\exp\Bigl(-\frac{\mu_nt}{\sigma_n}\Bigr)
\mathbb{E}(e^{t\varpi_n/\sigma_n})
=\exp\Bigl(-\frac{\mu_nt}{\sigma_n}\Bigr)\frac{Q_n(e^{t/\sigma_n})}{Q_n(1)}.\] Using
implicit differentiation, we obtain a Taylor expansion for the moment-generating
function, which yields
\[
  M_n(t)  =\exp\Bigl(\frac{t^2}2+\Oh\bigl(n^{2\beta/7}\bigr)\Bigr),
\]
proving the central limit theorem for $\varpi_n$. Finally, we will use Chernov's bound
for the tail estimates.

\section{Proof of the Main Result}

To prove Theorem~\ref{thm:main}, we apply the method we have outlined in the
previous section. As indicated above, we choose $r=r(n,u)$ such that the first
derivative in~\eqref{eq:taylor} vanishes, i.e.,
\[\left.\frac{\partial \left(int+f(r+it,u)\right)}{\partial
      t}\right|_{t=0}
  =in-i\sum_{k\geq1}\frac{kg(k)}{u^{-1}e^{kr}+1}=0.
\]
Since the sum is decreasing in $r$, we see that this equation has a
unique solution, which is the saddle point.  The main value of the
integral in~\eqref{integral:representation} lies around the positive
real axis. We set $t_n=r^{1+3\beta/7}$ and split the integral into two
ranges, namely into
\[
  Q_n(u) = I_1+I_2
\]
where
\begin{equation*}
  I_1=\frac{e^{nr}}{2\pi}\int_{\abs{t}\leq t_n}\exp\bigl(int+f(r+it,u)\bigr)\dd{t}
\end{equation*}
and
\begin{equation*}
  I_2=\frac{e^{nr}}{2\pi}\int_{t_n<\abs{t}\leq \pi}\exp\bigl(int+f(r+it,u)\bigr)\dd{t}.
\end{equation*}

\subsection{Estimate of $I_1$}
\label{sec:I1}
We start our considerations with the central integral $I_1$ and show
the following lemma on its asymptotic behavior.
\begin{lem}
  Let $B^{2} = f_{\tau\tau}(r,u)$. Then we have
  \begin{equation*}
    I_{1} = \frac{e^{nr+f(r, u)}}{2\pi}\left(\int_{-t_{n}}^{t_{n}}
  \exp\left(-\frac{B^2}{2}t^2\right)\dd{t}\right)\Bigl(1+\Oh\bigl(r^{2\beta/7}\bigr)\Bigr)
\end{equation*}
with
\begin{equation*}
 \int_{-t_{n}}^{t_{n}}
 \exp\left(-\frac{B^2}{2}t^2\right)\dd{t} = \frac{\sqrt{2\pi}}{B}+\Oh\biggl(
    r^{-1-3\beta/7}B^{-2}\exp\biggl(-\frac{r^{-\beta/7}}{2}\biggr)\biggr)
  \end{equation*}
  uniformly in~$u$.
\end{lem}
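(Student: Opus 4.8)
Throughout, write $h(t)\coloneqq int+f(r+it,u)$, so that $I_1=\frac{e^{nr}}{2\pi}\int_{\abs{t}\le t_n}e^{h(t)}\dd{t}$, and recall that $r=r(n,u)\to0$ as $n\to\infty$. The plan is to obtain the asymptotics of $I_1$ by a third-order Taylor expansion of $h$ about $t=0$, using the saddle-point condition to annihilate the linear term, and then to complete the resulting truncated Gaussian integral. By the defining equation for~$r$ we have $h'(0)=i\bigl(n+f_\tau(r,u)\bigr)=i\bigl(n-\sum_{k\ge1}\tfrac{kg(k)}{u^{-1}e^{kr}+1}\bigr)=0$, while $h''(0)=-f_{\tau\tau}(r,u)=-B^2$ and $\abs{h'''(t)}=\abs{f_{\tau\tau\tau}(r+it,u)}$; hence Taylor's theorem with the integral form of the remainder gives, uniformly for $\abs{t}\le t_n$,
\[
  h(t)=f(r,u)-\frac{B^2}{2}t^2+\Oh\Bigl(\abs{t}^3\sup_{\abs{s}\le t_n}\bigl\lvert f_{\tau\tau\tau}(r+is,u)\bigr\rvert\Bigr).
\]

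Next I would insert the Mellin-transform estimates sketched in Section~\ref{sec:outline}. Writing $g(k)=\beta k^{\beta-1}+(\text{lower-order polynomial and bounded oscillating terms})$ and applying Theorem~\ref{fgd:thm4} to the harmonic sum $f(\tau,u)=\sum_{k\ge1}g(k)\log(1+ue^{-k\tau})$ --- whose Mellin transform, by the rescaling rule, equals $\bigl(\sum_{k\ge1}g(k)k^{-s}\bigr)\bigl(-\Gamma(s)\Li_{s+1}(-u)\bigr)$ with rightmost singularity a simple pole at $s=\beta$ coming from the leading term $\beta\zeta(s-\beta+1)$ of the Dirichlet series of~$g$ --- gives $f(\tau,u)\asymp\tau^{-\beta}$ and therefore $B^2=f_{\tau\tau}(r,u)\asymp r^{-\beta-2}$ as well as $f_{\tau\tau\tau}(\tau,u)=\Oh(\abs{\tau}^{-\beta-3})$, all uniformly in~$u$. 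Since $t_n=r^{1+3\beta/7}=o(r)$, the segment $\{r+is:\abs{s}\le t_n\}$ stays within the region $\abs{\tau}\asymp r$, so $\sup_{\abs{s}\le t_n}\abs{f_{\tau\tau\tau}(r+is,u)}=\Oh(r^{-\beta-3})$; multiplying by $t_n^3=r^{3+9\beta/7}$ leaves an error exponent of order $r^{2\beta/7}=o(1)$. Consequently $e^{h(t)}=e^{f(r,u)}e^{-B^2t^2/2}\bigl(1+\Oh(r^{2\beta/7})\bigr)$ uniformly for $\abs{t}\le t_n$ and in~$u$, and pulling the constant factor out of the integral over $\abs{t}\le t_n$ yields the first displayed identity of the lemma.

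For the second identity I would complete the Gaussian tails,
\[
  \int_{-t_n}^{t_n}e^{-B^2t^2/2}\dd{t}=\frac{\sqrt{2\pi}}{B}-2\int_{t_n}^{\infty}e^{-B^2t^2/2}\dd{t},
\]
and, using $t/t_n\ge1$ on $[t_n,\infty)$,
\[
  \int_{t_n}^{\infty}e^{-B^2t^2/2}\dd{t}\le\frac{1}{t_n}\int_{t_n}^{\infty}t\,e^{-B^2t^2/2}\dd{t}=\frac{1}{t_nB^2}\,e^{-B^2t_n^2/2}.
\]
By the definition of~$t_n$ one has $\tfrac{1}{t_nB^2}=r^{-1-3\beta/7}B^{-2}$, whereas $B^2t_n^2=f_{\tau\tau}(r,u)\,r^{2+6\beta/7}\asymp r^{-\beta/7}$; invoking the explicit value of the leading Mellin coefficient one obtains $B^2t_n^2\ge r^{-\beta/7}$ for $r$ small enough, uniformly in~$u$, so $e^{-B^2t_n^2/2}\le e^{-r^{-\beta/7}/2}$, which is exactly the claimed error term.

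The step I expect to be the main obstacle is the uniform bound on $f_{\tau\tau\tau}(r+is,u)$ over the whole slit $\abs{s}\le t_n$ rather than only at $s=0$: Theorem~\ref{fgd:thm4} delivers the asymptotics on the positive real axis, and one must verify that displacing $\tau$ by $o(r)$ in the imaginary direction does not spoil them. This is precisely where the choice $t_n=r^{1+3\beta/7}$ with $3\beta/7>0$ is used: the dominant contribution to the termwise bound for $\abs{f_{\tau\tau\tau}(r+is,u)}$ --- whose $k$-th summand is $g(k)\,k^3$ times a bounded rational function of $ue^{-k(r+is)}$ --- comes from $k\lesssim 1/r$, for which $k\abs{s}\lesssim r^{3\beta/7}\to0$ so that $1+ue^{-k(r+is)}$ stays bounded away from~$0$, while the range $k\gg1/r$ is suppressed by the factor $e^{-kr}$; a secondary point is to check that the bounded oscillating part of $g(k)$ contributes no pole of the Mellin transform to the right of $s=\beta$, so that it does not affect the leading order of $f$ and its derivatives.
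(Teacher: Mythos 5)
Your proposal is correct and follows essentially the same route as the paper: a third-order Taylor expansion at the saddle point with the Mellin/converse-mapping asymptotics for $f_{\tau\tau}$ and $f_{\tau\tau\tau}$, followed by completion of the truncated Gaussian integral with the same tail bound. The step you flag as the main obstacle --- bounding $f_{\tau\tau\tau}(r+is,u)$ uniformly for $\abs{s}\leq t_n$ --- is resolved in the paper exactly as you sketch, by splitting the sum at $k_0=r^{-(1+c)}$, using $\abs{1+u^{-1}e^{k(r+it)}}\geq(1+u^{-1}e^{kr})\cos(kt/2)$ for small $k$ and the exponential decay $e^{-kr}$ for large $k$.
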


Recall that by ``uniformly in $u$'' we always mean ``uniformly as $\delta\leq
u\leq \delta^{-1}$''.

\begin{proof}
  It holds that
\begin{equation}
  \label{eq:expansion-of-f}
  f(r+it,u)=f(r,u)+f_\tau(r,u)it-\frac{f_{\tau\tau}(r,u)}2t^2+\Oh\Bigl(t^3\sup_{0\leq
    t_0\leq t}\abs{f_{\tau\tau\tau}(r+it_0,u)}\Bigr).
\end{equation}
Let $m$ be the integer with $m<\beta=1/\alpha\leq m+1$. 
Since
\begin{align}
  g(k)&=\ceil[\big]{(k+1)^{\beta}} - \ceil[\big]{k^{\beta}}\notag\\
      &=(k+1)^\beta-k^\beta+\Oh(1)\label{eq:g-k-with-error}\\
      &=\sum_{\nu=1}^m\binom{\beta}{\nu}k^{\beta-\nu}+\Oh(1),\notag
\end{align}
we derive
\begin{equation}
  \label{f_as_sum}
  \begin{split}
    f(\tau,u)&=\sum_{k\geq1}g(k)\log{(1+ue^{-k\tau})}\\
    &=-\sum_{k\geq1}g(k)\sum_{\ell\geq1}\frac{(-u)^\ell}{\ell}e^{-k\ell
      \tau}\\
    &=-\sum_{k\geq1}\sum_{\nu=1}^m\binom{\beta}{\nu}k^{\beta-\nu}\sum_{\ell\geq1}\frac{(-u)^\ell}{\ell}e^{-k\ell
      \tau}
    +\Oh\Biggl(\sum_{k\geq1}\sum_{\ell\geq1}\frac{(-u)^\ell}{\ell}e^{-k\ell \tau}\Biggr)\\
    &=-\sum_{\nu=1}^m\binom{\beta}{\nu}\sum_{k\geq1}k^{\beta-\nu}\sum_{\ell\geq1}\frac{(-u)^\ell}{\ell}e^{-k\ell
      \tau}
    +\Oh\Biggl(\sum_{k\geq1}\sum_{\ell\geq1}\frac{(-u)^\ell}{\ell}e^{-k\ell
        \tau}\Biggr).
  \end{split}
\end{equation}
For $j\geq0$, we analogously obtain
\begin{multline}
  \label{derivatives_of_f_as_sum}
  \frac{\partial^j f}{\partial \tau^j}(\tau, u)
  =(-1)^{j+1}\sum_{\nu=1}^m\binom{\beta}{\nu}\sum_{k\geq1}k^{\beta-\nu+j}\sum_{\ell\geq1}(-u)^\ell\ell^{j-1}e^{-k\ell
     \tau}\\
     +\Oh\Biggl(\sum_{k\geq1}k^j\sum_{\ell\geq1}(-u)^\ell
       \ell^{j-1}e^{-k\ell \tau}\Biggr).
\end{multline}
All infinite sums in~\eqref{derivatives_of_f_as_sum} are of the form
\[
  h_{\gamma,j}(\tau,u)=\sum_{k\geq1}k^{\gamma+j}\sum_{\ell\geq1}(-u)^\ell\ell^{j-1}e^{-k\ell \tau}
\]
with $\gamma > 0$. Let $H_{\gamma,j}(s,u)$ denote the Mellin transform
of $h_{\gamma,j}(\tau,u)$ with respect to $\tau$, then
$H_{\gamma,j}(s,u)$ is given by
\begin{align*}
  H_{\gamma,j}(s,u)
  &=\zeta(s-\gamma-j)\Li_{s-j+1}(-u)\Gamma(s).
\end{align*}
The function
$H_{\gamma,j}(s,u)$ converges for $\Re s>\gamma+j+2$ and its only pole in the range $j+\frac12<\Re s\leq\gamma+j+2$ is the one
from $\zeta(s-\gamma-j)$ in $s=\gamma+j+1$. The Riemann zeta function
and the polylogarithm grow only polynomially, whereas the Gamma
function decreases exponentially on every vertical line in the complex
plane; see Section~\ref{sec:outline}. Thus, we can apply converse mapping (Theorem~\ref{fgd:thm4})
and obtain
\[h_{\gamma,j}(\tau,u)=\Li_{\gamma+2}(-u)\Gamma(\gamma+j+1)\tau^{-\gamma-j-1}+\Oh\bigl(\tau^{-j-\frac12}\bigr).\]
By plugging everything into~\eqref{derivatives_of_f_as_sum}, we obtain
\begin{equation}
  \label{eq:f-tau}
  \begin{aligned}
    \frac{\partial^jf}{\partial \tau^j}(\tau,u)
    &=(-1)^{j+1}\sum_{\nu=1}^m\binom{\beta}{\nu}h_{\beta-\nu,j}(\tau,u)
    +\Oh\left(h_{0,j}(\tau,u)\right)\\
    &=(-1)^{j+1}\sum_{\nu=1}^m\binom{\beta}{\nu}\Li_{\beta-\nu+2}(-u)\Gamma(\beta-\nu+j+1)\tau^{-\beta+\nu-j-1}
    +\Oh\left(\tau^{-j-1}\right).
  \end{aligned}
\end{equation}
For the saddle point~$n$, this results in
\begin{equation}\label{n_as_function_of_r}
  n = -f_{\tau}(r, u) = -\sum_{\nu = 1}^{m}\binom{\beta}{\nu}\Li_{\beta-\nu+2}(-u)\Gamma(\beta -\nu +2) r^{-\beta+\nu-2} + \Oh\bigl(r^{-2}\bigr),
\end{equation}
whereas the second derivative is given by
\begin{equation*}
  B^{2} = f_{\tau\tau}(r, u) = -\sum_{\nu = 1}^{m}\binom{\beta}{\nu}\Li_{\beta-\nu+2}(-u)\Gamma(\beta -\nu + 3) r^{-\beta+\nu-3} + \Oh\bigl(r^{-3}\bigr).
\end{equation*}
For the third derivative occurring in the error term in~\eqref{eq:expansion-of-f}, we have
\begin{align*}
  \frac{\partial^3 f(r+it, u)}{\partial t^3}
  &=-i\sum_{k\geq1}\frac{k^{3}g(k)u^{-1}e^{k(r+it)}(1-u^{-1}e^{k(r+it)})}
  {(u^{-1}e^{k(r+it)}+1)^{3}}.
\end{align*}
We estimate this expression
following~\cite{madritsch_wagner2010:central_limit_theorem}: Let $k_{0} = r^{-(1+c)}$
for some constant $c > 0$ and write $v = u^{-1}$ for short. We
split the sum into two parts, according to whether $k \leq k_{0}$ or not. 
For the sum over large~$k$, we obtain
\begin{multline}
  \label{eq:sum-over-large-k}
  \abs[\bigg]{-i\sum_{k > k_{0}}\frac{k^{3}g(k)ve^{k(r+it)}(1-ve^{k(r+it)})}
    {(ve^{k(r+it)}+1)^{3}}} \leq \sum_{k > k_{0}}
  \frac{k^{3}g(k)ve^{kr}(1+ve^{kr})}{\abs{ve^{k(r + it)} + 1}^{3}}\\
  \leq \sum_{k > k_{0}} \frac{k^{3}g(k)ve^{kr}(1 + ve^{kr})}{(ve^{kr} -
    1)^{3}} \ll \sum_{k > k_{0}} \frac{k^{3}g(k)}{e^{kr}} \ll r^{-\beta-3}.
\end{multline}
For the remaining sum we note that
\begin{equation*}
  \abs{1 + ve^{k(r + it)}} \geq (1 + ve^{kr})\cos\Bigl(\frac{kt}{2}\Bigr).
\end{equation*}
Therefore, we get
\begin{align*}
  \abs[\bigg]{-i\sum_{k\leq k_{0}}\frac{k^{3}g(k)u^{-1}e^{k(r+it)}(1-u^{-1}e^{k(r+it)})}
  {(u^{-1}e^{k(r+it)}+1)^{3}}} &\leq \sum_{k\leq
  k_{0}}\frac{k^{3}g(k)ve^{kr}(1 + e^{kr})}{\abs{ve^{k(r+it)} + 1}^{3}}\\
  &\leq \sum_{k\leq k_{0}} \frac{k^{3}g(k)ve^{kr}(1+e^{kr})}{(ve^{kr} +
    1)^{3}}\Bigl(1+ O\bigl((kt)^{2}\bigr)\Bigr)\\
  &\leq \sum_{k\leq k_{0}} \frac{k^{3}g(k)}{ve^{kr}}\Bigl(1+
    O\bigl((kt)^{2}\bigr)\Bigr)\\
  &\leq \sum_{k\geq 1}\frac{k^{3}g(k)}{ve^{kr}} + O\Biggl(\sum_{k\geq
    1}\frac{k^{5}g(k)t^{2}}{e^{kr}}\Biggr).
\end{align*}
Using the Mellin transform and converse mapping, this results in
\begin{gather*}
  \abs[\bigg]{-i\sum_{k\leq k_{0}}\frac{k^{3}g(k)u^{-1}e^{k(r+it)}(1-u^{-1}e^{k(r+it)})}
  {(u^{-1}e^{k(r+it)}+1)^{3}}}
  \ll r^{-\beta-3}+r^{-\beta-5}t^2.
\end{gather*}
By combining this with~\eqref{eq:sum-over-large-k}, we
obtain
\[\frac{\partial^3 f(r+it, u)}{\partial t^3}\ll r^{-\beta-3}\]
for $\left| t\right|\leq r^{1+3\beta/7}$.

All in all, this leads to the expansion
\begin{equation*}
  f(r + it, u) = f(r, u) - int - \frac{B^{2}}{2}t^{2} + \Oh\bigl(r^{-\beta-3}t^3\bigr).
\end{equation*}
For the integral $I_1$, we thus obtain
\begin{equation}\label{estimate:I_1}
  \begin{split}
  &\frac{e^{nr}}{2\pi}\int_{-t_{n}}^{t_{n}}
  \exp(int+f(r+it, u))\dd{t}\\
  &\quad=\frac{e^{nr+f(r, u)}}{2\pi}\int_{-t_{n}}^{t_{n}}
  \exp\left(-\frac{B^2}{2}t^2+\Oh\bigl(r^{2\beta/7}\bigr)\right)\dd{t}\\
  &\quad=\frac{e^{nr+f(r, u)}}{2\pi}\left(\int_{-t_{n}}^{t_{n}}
  \exp\left(-\frac{B^2}{2}t^2\right)\dd{t}\right)\Bigl(1+\Oh\bigl(r^{2\beta/7}\bigr)\Bigr).
\end{split}
\end{equation}
Finally, we change the integral to a Gaussian integral and get
\begin{equation}
  \label{eq:estimate-I2}
  \begin{split}
  \int_{-t_{n}}^{t_{n}}
    \exp\left(-\frac{B^2}{2}t^2\right)\dd{t}
  &=\int_{-\infty}^{\infty}
    \exp\left(-\frac{B^2}{2}t^2\right)\dd{t}
    -2\int_{t_{n}}^{\infty}
    \exp\left(-\frac{B^2}{2}t^2\right)\dd{t}\\
  &=\frac{\sqrt{2\pi}}{B}+\Oh\biggl(
    \int_{t_{n}}^{\infty}
    \exp\biggl(-\frac{B^2r^{1+3\beta/7}}{2}t\biggr)\dd{t}\biggr)\\
  &=\frac{\sqrt{2\pi}}{B}+\Oh\biggl(
    r^{-1-3\beta/7}B^{-2}\exp\biggl(-\frac{r^{-\beta/7}}{2}\biggr)\biggr).
  \end{split}
\end{equation}
\end{proof}

\subsection{Estimate of $I_{2}$}
Next, we prove the following asymptotic upper bound for the
integral~$I_{2}$.
\begin{lem}
  \label{lem:I2}
  For $I_{2}$, it holds that
  \begin{equation*}
    \abs{I_{2}} \ll \exp\bigl( nr+f(r,u) - c_3r^{-\beta/7}\bigr),
  \end{equation*}
  where $c_{3}$ is a constant uniformly in~$u$.
\end{lem}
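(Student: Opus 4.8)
The plan is to show that, uniformly for $t_n<\abs{t}\le\pi$ and $\delta\le u\le\delta^{-1}$, the integrand defining $I_2$ is exponentially smaller than the main contribution $e^{nr+f(r,u)}$. Since $int$ is purely imaginary we have $\abs{\exp(int+f(r+it,u))}=e^{\Re f(r+it,u)}$, so it suffices to establish
\[
  \Re f(r+it,u)\le f(r,u)-c_3r^{-\beta/7}\qquad\text{for }t_n<\abs{t}\le\pi;
\]
indeed this gives $\abs{I_2}\le\frac{e^{nr}}{2\pi}\cdot2\pi\cdot e^{f(r,u)-c_3r^{-\beta/7}}=\exp\bigl(nr+f(r,u)-c_3r^{-\beta/7}\bigr)$.

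For the pointwise estimate I would expand the real part term by term. Writing $x_k=\dfrac{2ue^{-kr}(1-\cos(kt))}{(1+ue^{-kr})^2}\in[0,1]$, one has $\Re\log(1+ue^{-k(r+it)})=\log(1+ue^{-kr})+\tfrac12\log(1-x_k)$, so with $\log(1-x)\le -x$,
\[
  f(r,u)-\Re f(r+it,u)=-\frac12\sum_{k\ge1}g(k)\log(1-x_k)\ge\sum_{k\ge1}g(k)\,\frac{ue^{-kr}(1-\cos(kt))}{(1+ue^{-kr})^2}.
\]
Since $(1+ue^{-kr})^2\le(1+\delta^{-1})^2$ and, by~\eqref{eq:g-k-with-error}, $g(k)\gg k^{\beta-1}$, the right-hand side is $\gg_{\delta}\Sigma(t)$, where
\[
  \Sigma(t):=\sum_{k\ge1}k^{\beta-1}e^{-kr}(1-\cos(kt))=\Li_{1-\beta}(e^{-r})-\Re\Li_{1-\beta}\bigl(e^{-(r-it)}\bigr).
\]
It then remains to prove $\Sigma(t)\gg r^{-\beta/7}$.

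Fix a small constant $\varepsilon_0>0$ for which the polylogarithm expansion from Section~\ref{sec:outline} (with index $1-\beta$, derived by the same converse mapping as there) is valid on the disc $\abs{w}\le2\varepsilon_0$. For $t_n<\abs{t}\le\varepsilon_0$ and $r$ small, $w=r-it$ lies in that disc and in the right half-plane, so $\Li_{1-\beta}(e^{-w})=\Gamma(\beta)w^{-\beta}+\Oh(1)$, whence
\[
  \Sigma(t)=\Gamma(\beta)r^{-\beta}\Bigl(1-(1+(t/r)^2)^{-\beta/2}\cos\bigl(\beta\arctan(t/r)\bigr)\Bigr)+\Oh(1)=\Gamma(\beta)r^{-\beta}\phi(\abs{t}/r)+\Oh(1).
\]
The function $\phi(\theta)=1-(1+\theta^2)^{-\beta/2}\cos(\beta\arctan\theta)$ is strictly positive on $(0,\infty)$ (because $(1+\theta^2)^{-\beta/2}<1$ and $\cos\le1$), it satisfies $\phi(\theta)\sim\tfrac{\beta(\beta+1)}{2}\theta^2$ as $\theta\to0$ and $\phi(\theta)\to1$ as $\theta\to\infty$, hence $\phi(\theta)\gg\min(\theta^2,1)$. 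With $\theta=\abs{t}/r$ this gives $\Sigma(t)\gg r^{-\beta}\min((t/r)^2,1)\ge\min(r^{-\beta-2}t_n^2,r^{-\beta})=r^{-\beta/7}$ (using $t_n=r^{1+3\beta/7}$), which also absorbs the $\Oh(1)$. For $\varepsilon_0<\abs{t}\le\pi$, the point $e^{-(r-it)}$ stays in a fixed compact subset of $\C\setminus[1,\infty)$ as $r\to0$, so $\Li_{1-\beta}(e^{-(r-it)})=\Oh(1)$ uniformly, while $\Li_{1-\beta}(e^{-r})\sim\Gamma(\beta)r^{-\beta}$; hence $\Sigma(t)\gg r^{-\beta}\gg r^{-\beta/7}$. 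In all cases $f(r,u)-\Re f(r+it,u)\gg r^{-\beta/7}$ with an implied constant independent of $u$, which yields the claim.

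The step I expect to be the main obstacle is the uniform lower bound for $\Sigma(t)$ over the whole range $t_n<\abs{t}\le\pi$: near $\abs{t}\asymp r$ one crosses between the two asymptotic regimes of $\phi$, and for $\abs{t}$ of constant order the polylogarithm expansion is no longer available and must be replaced by the analyticity of $\Li_{1-\beta}$ off the ray $[1,\infty)$. Establishing the positivity of $\phi$ on $(0,\infty)$ together with its precise $\theta^2$-behaviour near $0$ is the delicate point, and it is exactly this behaviour that forces the exponent $-\beta/7$ and explains the choice $t_n=r^{1+3\beta/7}$.
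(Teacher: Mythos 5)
Your proposal is correct, but it takes a genuinely different route for the key lower bound than the paper does. The first half coincides with the paper's argument inside Lemma~\ref{lem:Qrit_to_Qr}: you reduce $\abs{I_2}$ to bounding $f(r,u)-\Re f(r+it,u)$ from below, pass via $-\log(1-x)\geq x$ to the sum $\sum_k g(k)e^{-kr}(1-\cos(kt))$, and use $g(k)\gg k^{\beta-1}$ (here the paper is slightly more careful: from~\eqref{eq:g-k-with-error} alone the bound is only clear for large~$k$, whereas the paper derives $g(k)>(\beta-1)k^{\beta-1}$ for \emph{all} $k$ from $g(k)\geq(k+1)^\beta-k^\beta-1$ and the mean value theorem; alternatively you can simply drop the finitely many small $k$, since all terms are nonnegative and they contribute only $\Oh(1)$). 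From there the paper quotes Li--Chen (Lemma~\ref{lc:lem2.5}) to bound the trigonometric sum below by $\rho\bigl(e^{-r}(1-e^{-r})^{-\beta}-e^{-r}\abs{1-e^{-r-it}}^{-\beta}\bigr)$ and then only needs the elementary monotonicity $1-\cos t\geq 1-\cos t_n$ on $t_n\leq\abs{t}\leq\pi$ together with a Taylor expansion to extract the factor $r^{6\beta/7-\beta}=r^{-\beta/7}$. You instead evaluate the sum directly as $\Li_{1-\beta}(e^{-r})-\Re\Li_{1-\beta}(e^{-(r-it)})$, using the singular expansion $\Li_{1-\beta}(e^{-w})=\Gamma(\beta)w^{-\beta}+\Oh(1)$ (valid uniformly for $\abs{w}$ small with $\Re w>0$, by the Lindel\"of--Ford identity, including the boundary regime $\arg w\to\pm\pi/2$) together with the positivity and $\theta^2$-behaviour of $\phi(\theta)=1-(1+\theta^2)^{-\beta/2}\cos(\beta\arctan\theta)$, and analyticity of $\Li_{1-\beta}$ away from $[1,\infty)$ for $\abs{t}$ of constant size. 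This is self-contained (no appeal to the external Li--Chen inequality) and makes transparent why the threshold $t_n=r^{1+3\beta/7}$ produces exactly the saving $r^{-\beta/7}$, at the cost of having to justify uniformity of the polylogarithm expansion across the transition $\abs{t}\asymp r$; the paper's route is more elementary once the quoted lemma is granted, since after it only real-variable estimates of $\abs{1-e^{-r-it}}$ remain. Both arguments yield the same constant-in-$u$ uniformity and the same bound $\abs{I_2}\ll\exp\bigl(nr+f(r,u)-c_3r^{-\beta/7}\bigr)$.
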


For the proof of this estimate, we need the following two lemmas. The first
lemma provides an upper bound for some exponential that will occur later on,
whereas the second one says that $\abs{Q(e^{-r-it},u)}$ is small compared to
$Q(e^{r},u)$. These results are the main ingredients for the proof of
Lemma~\ref{lem:I2}.

\begin{lem}[{Li--Chen~\cite[Lemma~2.5]{li_chen2018:r_th_root}}]
  \label{lc:lem2.5}
  Let $\beta\geq1$. For $\sigma>0$, we have
  \[
    \sum_{k\geq1}k^{\beta-1}e^{-k\sigma}(1-\cos(ky))
    \geq\rho\biggl(\frac{e^{-\sigma}}{(1-e^{-\sigma})^\beta}-
        \frac{e^{-\sigma}}{\abs{1-e^{-\sigma-iy}}^\beta}\biggr),
  \]
  where $\rho$ is a positive constant depending only on $\beta$.
\end{lem}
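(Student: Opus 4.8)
The plan is to bound the left-hand side below by the analogous series in which the polynomial weight $k^{\beta-1}$ is replaced by the Newton binomial coefficients of $(1-x)^{-\beta}$, and then to conclude with nothing more than the inequality $\Re w\le\abs w$. Write $c_k\coloneqq\binom{\beta+k-2}{k-1}=\prod_{j=1}^{k-1}\bigl(1+\tfrac{\beta-1}{j}\bigr)$, so that $c_k>0$ for every $k\ge1$ and $\sum_{k\ge1}c_k x^k=x(1-x)^{-\beta}$ for $\abs x<1$. The first step is to record the elementary comparison $c_k\le b\,k^{\beta-1}$ for all $k\ge1$ with $b=b(\beta)$: since $\beta\ge1$, taking logarithms and using $\log(1+t)\le t$ together with $\sum_{j<k}j^{-1}\le1+\log k$ yields the crude but uniform bound $c_k\le e^{\beta-1}k^{\beta-1}$, so one may take $b=e^{\beta-1}$. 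In particular $c_k=\Oh(k^{\beta-1})$, so every series that appears converges for $\sigma>0$.

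Next I would fix $\sigma>0$ and $y\in\R$ and set $z\coloneqq e^{-\sigma-iy}$, noting that $\abs z=e^{-\sigma}<1$, that $\abs{z}^{k}\cos(ky)=\Re(\bar{z}^{k})$, and that $\abs{1-\bar z}=\abs{1-z}=\abs{1-e^{-\sigma-iy}}$. Applying the binomial identity once with $x=\abs z$ and once with $x=\bar z$, the weighted series sums in closed form to
\[
\sum_{k\ge1}c_k e^{-k\sigma}\bigl(1-\cos(ky)\bigr)
=\frac{e^{-\sigma}}{(1-e^{-\sigma})^\beta}-\Re\frac{\bar z}{(1-\bar z)^\beta},
\]
and then $-\Re w\ge-\abs w$ applied to $w=\bar z(1-\bar z)^{-\beta}$ gives the lower bound $\dfrac{e^{-\sigma}}{(1-e^{-\sigma})^\beta}-\dfrac{e^{-\sigma}}{\abs{1-e^{-\sigma-iy}}^\beta}$ for this sum.

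Finally, since $1-\cos(ky)\ge0$ and $k^{\beta-1}\ge b^{-1}c_k$ hold termwise, summing term by term gives
\[
\sum_{k\ge1}k^{\beta-1}e^{-k\sigma}\bigl(1-\cos(ky)\bigr)
\ge b^{-1}\sum_{k\ge1}c_k e^{-k\sigma}\bigl(1-\cos(ky)\bigr)
\ge b^{-1}\biggl(\frac{e^{-\sigma}}{(1-e^{-\sigma})^\beta}-\frac{e^{-\sigma}}{\abs{1-e^{-\sigma-iy}}^\beta}\biggr),
\]
which is the assertion with $\rho=b^{-1}=e^{1-\beta}>0$. I do not anticipate a genuine obstacle: the single point that needs (routine) attention is ensuring that the comparison $c_k\asymp k^{\beta-1}$ holds with a constant uniform in $k$ rather than merely in the limit $k\to\infty$, which is exactly why I would use the explicit logarithmic estimate above instead of the sharper asymptotic $c_k\sim k^{\beta-1}/\Gamma(\beta)$.
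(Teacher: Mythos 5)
Your argument is correct, and it is worth noting that the paper itself contains no proof of this statement at all: the lemma is imported verbatim from Li--Chen \cite[Lemma~2.5]{li_chen2018:r_th_root} and used as a black box. Your proof is therefore a genuinely self-contained alternative to the citation, and all of its steps check out: the identity $\binom{\beta+k-2}{k-1}=\prod_{j=1}^{k-1}\bigl(1+\tfrac{\beta-1}{j}\bigr)$, the uniform bound $c_k\le e^{\beta-1}k^{\beta-1}$ via $\log(1+t)\le t$ and the harmonic-sum estimate, the closed forms $\sum_{k\ge1}c_k x^k=x(1-x)^{-\beta}$ evaluated at $x=e^{-\sigma}$ and $x=\bar z$ (absolute convergence is guaranteed since $c_k\ll_\beta k^{\beta-1}$ and $\abs{\bar z}=e^{-\sigma}<1$), and the final passage from $-\Re w\ge-\abs{w}$ with $\abs{(1-\bar z)^{\beta}}=\abs{1-\bar z}^{\beta}=\abs{1-e^{-\sigma-iy}}^{\beta}$ (principal branch, legitimate because $\Re(1-\bar z)>0$ and $\beta$ is real). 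The termwise comparison in the last display is valid precisely because each summand carries the nonnegative factor $1-\cos(ky)$, which you point out. Two small merits of your route over the citation: it yields an explicit admissible constant $\rho=e^{1-\beta}$ (the paper only needs some positive $\rho=\rho(\beta)$, and indeed only uses the lemma with $e^{-\sigma}/(1-e^{-\sigma})^{\beta}-e^{-\sigma}/\abs{1-e^{-\sigma-iy}}^{\beta}$ subsequently bounded below, so an explicit $\rho$ loses nothing), and it makes transparent that the inequality is really just the comparison of the weights $k^{\beta-1}$ with the binomial coefficients of $(1-x)^{-\beta}$ plus $\Re w\le\abs{w}$. If you wanted to polish it, you could add one sentence stating explicitly that $(1-\bar z)^{\beta}$ denotes the principal power and that the binomial series identity persists for complex arguments of modulus less than one, but these are routine points and do not affect correctness.
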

\begin{lem}\label{lem:Qrit_to_Qr}
  For any real $y$ with $t_{0}\leq\abs{y} \leq \pi$, we derive
  \[\frac{\abs{Q(e^{-(r+iy)},u)}}{Q(e^{-r},u)}
    \leq \exp\biggl(-\frac{2u(\beta-1)}{(1+u)^{2}}\rho\Bigl(\frac14 r^{-\beta/7}\Bigr)\biggr)\]
  for some constant~$\rho$ depending only on~$\beta$.
\end{lem}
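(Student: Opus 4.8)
The plan is to compare $\log\abs{Q(e^{-(r+iy)},u)}$ with $\log Q(e^{-r},u)$ termwise. Writing $v=u^{-1}$ for convenience, we have
\[
  \log\frac{\abs{Q(e^{-(r+iy)},u)}}{Q(e^{-r},u)}
  =\sum_{k\geq1}g(k)\Bigl(\log\abs{1+ue^{-k(r+iy)}}-\log(1+ue^{-kr})\Bigr),
\]
and since $\abs{1+ue^{-k(r+iy)}}\le 1+ue^{-kr}$ with equality only when $ky\in2\pi\Z$, each summand is $\le 0$; the whole point is to extract a quantitative gap. The elementary inequality $\log\abs{1+w}-\log(1+\abs{w})\le -\tfrac12\cdot\tfrac{\abs{w}}{(1+\abs{w})^2}(1-\cos(\arg w))$ (or a variant of it, obtained from $\abs{1+w}^2=(1+\abs{w})^2-2\abs{w}(1-\cos\arg w)$ together with $\log(1-x)\le -x$) applied with $w=ue^{-k(r+iy)}$, $\arg w=-ky$, gives
\[
  \log\frac{\abs{Q(e^{-(r+iy)},u)}}{Q(e^{-r},u)}
  \le -\sum_{k\geq1}g(k)\,\frac{ue^{-kr}}{(1+ue^{-kr})^2}\,\bigl(1-\cos(ky)\bigr).
\]
Using $g(k)\gg k^{\beta-1}$ for all $k\ge1$ (which follows from \eqref{eq:g-k-with-error}, since $(k+1)^\beta-k^\beta\asymp k^{\beta-1}$ and the $\Oh(1)$ error is absorbed for $k$ large, with small $k$ handled separately) and the monotone bound $\tfrac{ue^{-kr}}{(1+ue^{-kr})^2}\ge \tfrac{u}{(1+u)^2}e^{-kr}$ valid for $kr\geq 0$ because $t\mapsto \tfrac{t}{(1+t)^2}$ is decreasing on $t\ge1$ — more carefully, one uses $e^{-kr}\le 1$ so that $ue^{-kr}\le u$ and hence $\tfrac{ue^{-kr}}{(1+ue^{-kr})^2}\ge \tfrac{e^{-kr}u}{(1+u)^2}$ — we arrive at
\[
  \log\frac{\abs{Q(e^{-(r+iy)},u)}}{Q(e^{-r},u)}
  \le -\frac{cu}{(1+u)^2}\sum_{k\geq1}k^{\beta-1}e^{-k\sigma}\bigl(1-\cos(ky)\bigr)
\]
with $\sigma=r$ and some absolute $c>0$ coming from the lower bound on $g(k)$; after adjusting constants one may take $c$ so that the factor reads $\tfrac{2(\beta-1)}{(1+u)^2}$ times $\tfrac{u}{1}$ as in the statement.

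Now I would invoke Lemma~\ref{lc:lem2.5} with $\sigma=r$ to bound the sum below by $\rho\bigl(\tfrac{e^{-r}}{(1-e^{-r})^\beta}-\tfrac{e^{-r}}{\abs{1-e^{-r-iy}}^\beta}\bigr)$. It remains to show this bracket is $\gg \rho(\tfrac14 r^{-\beta/7})$ — note the statement's $\rho(\cdot)$ is a slight abuse, presumably meaning $\rho$ times its argument, or $\rho$ evaluated at a function; reading it as $\rho$ (the constant) times $\tfrac14 r^{-\beta/7}$, the task is to prove
\[
  \frac{e^{-r}}{(1-e^{-r})^\beta}-\frac{e^{-r}}{\abs{1-e^{-r-iy}}^\beta}\gg r^{-\beta/7}
\]
for $t_0\le\abs{y}\le\pi$. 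Since $\abs{1-e^{-r-iy}}^2=(1-e^{-r})^2+2e^{-r}(1-\cos y)$, this difference is positive and, for $r\to0$, the first term is $\asymp r^{-\beta}$ while the second is at most $\asymp (2e^{-r}(1-\cos y))^{-\beta/2}$, which for $\abs{y}\ge t_0=r^{1+3\beta/7}$ is $\ll (1-\cos t_0)^{-\beta/2}\asymp t_0^{-\beta}=r^{-\beta(1+3\beta/7)}$. Comparing exponents, $r^{-\beta}$ dominates $r^{-\beta(1+3\beta/7)}$? No — here care is needed: $-\beta(1+3\beta/7)<-\beta$, so in fact the bound on the subtracted term is the smaller quantity and the difference is $\asymp r^{-\beta}\gg r^{-\beta/7}$ comfortably. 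One does have to check the regime $\abs{y}$ close to $\pi$ separately (there $\abs{1-e^{-r-iy}}\asymp1$, so the subtracted term is $\asymp1$ and the difference is $\asymp r^{-\beta}$), and the intermediate regime by monotonicity of $y\mapsto\abs{1-e^{-r-iy}}$ on $[0,\pi]$.

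The main obstacle is the bookkeeping in the first step: producing the clean termwise inequality with the explicit constant $\tfrac{2u(\beta-1)}{(1+u)^2}$ rather than just "some positive constant," and in particular justifying the replacement of $\tfrac{ue^{-kr}}{(1+ue^{-kr})^2}$ by its worst case and the lower bound $g(k)\ge 2(\beta-1)k^{\beta-1}(1+o(1))$ uniformly in $k$ — the subtlety being that \eqref{eq:g-k-with-error} only gives $g(k)=(k+1)^\beta-k^\beta+\Oh(1)$, so for bounded $k$ the $\Oh(1)$ is not negligible and one must either restrict the sum to $k\geq K$ for a suitable fixed $K$ (the tail $k<K$ only strengthens the inequality since all terms are $\le0$, wait — it weakens it, so one instead notes those finitely many terms contribute $\Oh(1)$ which is dominated by the $r^{-\beta/7}\to\infty$ main term) or use the exact formula $g(k)=\lceil(k+1)^\beta\rceil-\lceil k^\beta\rceil$ directly. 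Everything after invoking Lemma~\ref{lc:lem2.5} is a routine asymptotic comparison of the two competing powers of $r$, using only $t_0=r^{1+3\beta/7}$ and $1-\cos t_0\asymp t_0^2$.
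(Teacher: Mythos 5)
Your first half runs along exactly the paper's lines: the identity $\abs{1+ue^{-k(r+iy)}}^2=(1+ue^{-kr})^2-2ue^{-kr}(1-\cos ky)$, the bound $\log(1-x)\le -x$, pulling out the factor $\tfrac{2u}{(1+u)^2}$ via $ue^{-kr}\le u$, the lower bound $g(k)\gg k^{\beta-1}$ (the paper gets the clean constant $g(k)>(\beta-1)k^{\beta-1}$ for \emph{all} $k\ge1$ from $g(k)\ge(k+1)^\beta-k^\beta-1=\beta\xi^{\beta-1}-1$ with $\xi\in(k,k+1)$ and $\xi^{\beta-1}\ge1$, so your worry about small $k$ can be avoided entirely), and then Lemma~\ref{lc:lem2.5}. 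Up to the choice of constants this is the intended argument.

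The genuine gap is in your final comparison. You drop $(1-e^{-r})^2$ from $\abs{1-e^{-r-iy}}^2=(1-e^{-r})^2+2e^{-r}(1-\cos y)$ and bound the subtracted term by roughly $(1-\cos t_0)^{-\beta/2}\asymp r^{-\beta(1+3\beta/7)}$; you then conclude it is ``the smaller quantity'' because its exponent is more negative. But $r\to0$ here (recall $r\asymp n^{-1/(\beta+1)}$), so a more negative exponent means a \emph{larger} quantity: $r^{-\beta(1+3\beta/7)}\gg r^{-\beta}$, and your upper bound on the subtracted term exceeds the main term, so the comparison yields nothing. In fact your claim that the difference is $\asymp r^{-\beta}$ is false when $\abs{y}$ is of order $t_n=r^{1+3\beta/7}$: there $2e^{-r}(1-\cos y)/(1-e^{-r})^2\asymp r^{6\beta/7}$ is small, the two terms $e^{-r}(1-e^{-r})^{-\beta}$ and $e^{-r}\abs{1-e^{-r-iy}}^{-\beta}$ are both $\asymp r^{-\beta}$ and nearly cancel, and the difference is only
\[
  \frac{e^{-r}}{(1-e^{-r})^\beta}\Bigl(1-\bigl(1+r^{6\beta/7}(1+\Oh(r))\bigr)^{-\beta/2}\Bigr)
  \asymp r^{-\beta}\cdot r^{6\beta/7}=r^{-\beta/7},
\]
which is precisely why the exponent $-\beta/7$ appears in the lemma (a correct proof could not give $r^{-\beta}$). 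The paper's way out is to keep the factor $(1-e^{-r})^2$: one uses $\abs{1-e^{-r-iy}}^\beta\ge(1-e^{-r})^\beta\bigl(1+\tfrac{2e^{-r}}{(1-e^{-r})^2}(1-\cos t_n)\bigr)^{\beta/2}$, expands $1-\cos t_n=\tfrac12 r^{2+6\beta/7}+\Oh(r^{4+12\beta/7})$ to get the inner ratio $=r^{6\beta/7}(1+\Oh(r))$, and then deduces the difference is at least $\tfrac14 r^{6\beta/7-\beta}=\tfrac14 r^{-\beta/7}$ for $n$ large. You need to replace your endgame by this (or an equivalent) expansion; monotonicity in $\abs{y}$ is fine for reducing to $\abs{y}=t_n$, but the estimate at $t_n$ itself must be done with the cancellation taken into account.
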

\begin{proof}
  First of all, we note that
  \begin{equation*}
    g(k) = \ceil[\big]{(k+1)^{\beta}} - \ceil[\big]{k^{\beta}} \geq (k+1)^{\beta} - k^{\beta} - 1.
  \end{equation*}
By the mean value theorem, there exists $\xi\in(k,k+1)$ such that
\begin{equation*}
  (k+1)^{\beta} - k^{\beta} = \beta\xi^{\beta-1},
\end{equation*}
which leads to
\begin{equation*}
  g(k) \geq \beta\xi^{\beta-1} - 1 = (\beta-1)\xi^{\beta-1} + \xi^{\beta-1} - 1 > (\beta-1)k^{\beta-1};
\end{equation*}
see also Li and Chen~\cite[Proof of Lemma~2.6]{li_chen2018:r_th_root}.
Moreover, we have
\begin{align*}
  \abs{1+ue^{-k(r+iy)}}^{2}
  &=(1+ue^{-kr-kiy})(1+ue^{-kr+kiy})\\
  &=1+ue^{-kr-kiy}+ue^{-kr+kiy}+u^2e^{-2kr}\\
  &=1+2ue^{-kr}+(ue^{-kr})^2-2ue^{-kr}+2ue^{-kr}\cos(ky)\\
  &=(1+ue^{-kr})^2-2ue^{-kr}(1-\cos(ky)).
\end{align*}
Using this, it holds that
\begin{align*}
  \biggl(\frac{\abs{Q(e^{-(r+iy)},u)}}{Q(e^{-r},u)}\biggr)^2
  &=\prod_{k\geq1}\biggl(1-\frac{2ue^{-kr}(1-\cos(ky))}{(1+ue^{-kr})^2}\biggr)^{g(k)}\\
  &=\exp\biggl(\sum_{k\geq1}g(k)\log\biggl(1-\frac{2ue^{-kr}(1-\cos(ky))}{(1+ue^{-kr})^2}\biggr)\biggr)\\
  &\leq\exp\biggl(-\sum_{k\geq1}g(k)\frac{2ue^{-kr}(1-\cos(ky))}{(1+ue^{-kr})^2}\biggr)\\
  &\leq\exp\biggl(-\frac{2u}{(1+u)^2}\sum_{k\geq1}g(k)e^{-kr}(1-\cos(ky))\biggr)\\
  &\leq\exp\biggl(-\frac{2u(\beta-1)}{(1+u)^2}\sum_{k\geq1}k^{\beta-1}e^{-kr}(1-\cos(ky))\biggr)\\
  &\leq\exp\biggl(-\frac{2u(\beta-1)}{(1+u)^{2}}\rho\biggl(\frac{e^{-r}}{(1-e^{-r})^{\beta}}
    - \frac{e^{-r}}{\abs{1-e^{-r-iy}}^{\beta}}\biggr)\biggr),
\end{align*}
where the last estimate follows by Lemma~\ref{lc:lem2.5}.

Following the lines of Li and Chen~\cite{li_chen2018:r_th_root} again,
we further have
\begin{align*}
  \abs{1 - e^{-r - it}}^{\beta} &= (1 - 2e^{-r}\cos t + e^{-2r})^{\beta/2}\\
                                &= ((1 - e^{-r})^{2} +2e^{-r}(1 - \cos t))^{\beta/2}\\
  &\geq (1-e^{-r})^{\beta}\Bigl(1 + \frac{2e^{-r}}{(1 - e^{-r})^{2}}(1 - \cos t_{n})\Bigr)^{\beta/2}
\end{align*}
for $t_{n}\leq \abs{t}\leq \pi$.
Since
\begin{align*}
  1 - \cos t_{n} = \frac12 t_{n}^{2} + \Oh(t_{n}^{4}) = \frac12 r^{2+6\beta/7}+ \Oh(r^{4 + 12\beta/7})
\end{align*}
 and $e^{-r} = 1 - r + \Oh(r^{2})$, we find
\begin{align*}
  \frac{2e^{r}}{(1-e^{r})^{2}}(1 - \cos t_{n}) = \frac{2(1 + \Oh(r))(\frac12 r^{2 + 6\beta/7} + \Oh(r^{4+12\beta/7}))}{r^{2}(1 + \Oh(r))} = r^{6\beta/7}(1 + \Oh(r)).
\end{align*}
This further implies
\begin{align*}
  \abs{1 - e^{-r - it}}^{\beta} &\geq (1 - e^{-r})^{\beta}\bigl(1 + r^{6\beta/7}(1 + \Oh(r))\bigr)^{\beta/2}\\
  &= (1 - e^{r})^{\beta}\Bigl(1 + \frac{\beta}{2}r^{6\beta/7} + \Oh(r^{6\beta/7 + 1})\Bigr).
\end{align*}
This lower bound results in
\begin{align*}
  \frac{e^{r}}{(1-e^{r})^\beta}-\frac{e^{r}}{\left|1-e^{-r-it}\right|^\beta}
  &\geq \frac{e^{r}}{(1-e^{r})^\beta}\left(1-\frac{1}{1+\frac{\beta}2r^{6\beta/7}+\Oh\left(r^{6\beta/7+1}\right)}\right)\\
  &=\frac{e^{r}}{(1-e^{r})^\beta}\left(\frac{\beta}2r^{6\beta/7}+\Oh\left(r^{6\beta/7+1}\right)\right)\\
  &=\frac{1+\Oh(r)}{r^\beta(1-\Oh(r))}\left(\frac{\beta}2r^{6\beta/7}+\Oh\left(r^{6\beta/7+1}\right)\right)\\
  &=\frac{\beta}2 r^{6\beta/7-\beta}+\Oh\bigl(r^{6\beta/7+1-\beta}\bigr)
\end{align*}
and finally
\[
  \frac{e^{r}}{(1-e^{r})^\beta}-\frac{e^{r}}{\left|1-e^{-r-it}\right|^\beta}
  \geq \frac14 r^{6\beta/7-\beta}
\]
for sufficiently large~$n$. So we consequently obtain
\begin{equation*}
  \biggl(\frac{\abs{Q(e^{-(r+iy)}, u)}}{Q(e^{-r}, u)}\biggr)^2 \leq
  \exp\biggl(-\frac{2u(\beta-1)}{(1+u)^{2}}\rho\Bigl(\frac14 r^{6\beta/7-\beta}\Bigr)\biggr),
\end{equation*}
completing the proof.
\end{proof}

\begin{proof}[Proof of Lemma~\ref{lem:I2}]
By the definition of~$I_{2}$, it holds that
\begin{gather*}
  \abs{I_2}
  \leq \frac{e^{nr}}{\pi}\int_{t_n}^\pi \abs{Q(e^{-r-it}, u)}\dd{t}
  = \frac{e^{nr+f(r, u)}}{\pi}\int_{t_n}^\pi \frac{\abs{Q(e^{-r-it}, u)}}{Q(e^{-r},u)}\dd{t}.
\end{gather*}
An application of Lemma~\ref{lem:Qrit_to_Qr} thus yields
\begin{gather}\label{estimate:I_2}
  \abs{I_2} \ll \exp\bigl( nr+f(r,u) - c_3r^{-\beta/7}\bigr)
\end{gather}
for a certain constant $c_3$ uniformly in $u$, as stated.
\end{proof}

\subsection{Estimate of $Q_n(u)$ and Moment-Generating Function}
After estimating the main term and the contribution away from the real axis, we put~\eqref{eq:estimate-I2} and~\eqref{estimate:I_2} together and get
\[
  Q_n(u)
  =\frac{e^{nr+f(u,r)}}{\sqrt{2\pi B}}\bigl(1+\mathcal{O}(r^{2\beta/7})\bigr).
\]

The program for the last part of the proof is to consider the moment-generating
function using this asymptotic expansion. This will prove the central limit
theorem. Finally at the end of this section we use Chernov's bound in order to
obtain the desired tail estimates.


Now we will consider the moment-generating
function for the random variable $\varpi_n$ (the number of summands in
a random partition of $n$). To this end, let
$M_n(t)=\mathbb{E}(e^{(\varpi_n-\mu_n)t/\sigma_n})$, where~$t$ is real
and~$\mu_n$ and~$\sigma_n$ are the mean and the standard deviation
as defined in~\eqref{mu} and~\eqref{sigma}, respectively. Then the following estimation holds.
\begin{lem}
  For bounded~$t$, it holds that
  \begin{equation*}
    M_n(t) = \exp\left(\frac{t^{2}}{2} + \Oh(n^{2\beta/7})\right)
  \end{equation*}
  as $n\to\infty$.
\end{lem}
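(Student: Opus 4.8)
The plan is to feed the saddle-point asymptotic $Q_n(u)=\tfrac{e^{nr+f(r,u)}}{\sqrt{2\pi}\,B}\bigl(1+\Oh(r^{2\beta/7})\bigr)$ established above (where $r=r(n,u)$ is the saddle point and $B^{2}=f_{\tau\tau}(r,u)$) into the identity $M_n(t)=\exp(-\mu_nt/\sigma_n)\,Q_n(e^{t/\sigma_n})/Q_n(1)$. Since $\sigma_n\to\infty$, for bounded~$t$ we have $e^{t/\sigma_n}\in[\delta,\delta^{-1}]$ for all large~$n$, so every uniform-in-$u$ estimate applies. Writing $u=e^{t/\sigma_n}$, $\eta=r(n,1)$ and $\varphi(s)\coloneqq n\,r(n,e^{s})+f(r(n,e^{s}),e^{s})$, and using $r(n,u)\asymp\eta\asymp n^{-1/(\beta+1)}$ from~\eqref{n_as_function_of_r}, this gives
\[
  \log M_n(t)=-\frac{\mu_nt}{\sigma_n}+\varphi\Bigl(\frac{t}{\sigma_n}\Bigr)-\varphi(0)-\frac12\log\frac{f_{\tau\tau}(r(n,u),u)}{f_{\tau\tau}(\eta,1)}+\Oh\bigl(r^{2\beta/7}\bigr).
\]
It then remains to Taylor-expand $\varphi$ at $s=0$, its coefficients being computed by implicit differentiation of the saddle-point relation $n=-f_\tau(r(n,e^{s}),e^{s})$.

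The conceptual core is that the first two Taylor coefficients of $\varphi$ at $s=0$ are \emph{exactly} the quantities $\mu_n$ and $\sigma_n^2$ occurring in~\eqref{mu} and~\eqref{sigma}. Differentiating $\varphi$ and using that $\partial_\tau\bigl(n\tau+f(\tau,e^{s})\bigr)$ vanishes at $\tau=r(n,e^{s})$ (the envelope property), the $r'(s)$-contributions cancel and $\varphi'(s)=\sum_{k\geq1}g(k)/(e^{kr(n,e^{s})-s}+1)$, so $\varphi'(0)=\sum_{k\geq1}g(k)/(e^{k\eta}+1)=\mu_n$. One more differentiation gives $\varphi''(0)=\sum_{k\geq1}\tfrac{g(k)e^{k\eta}}{(e^{k\eta}+1)^{2}}-r'(0)\sum_{k\geq1}\tfrac{g(k)ke^{k\eta}}{(e^{k\eta}+1)^{2}}$, while implicit differentiation of the saddle-point relation at $s=0$ yields $r'(0)=\bigl(\sum_{k\geq1}\tfrac{g(k)ke^{k\eta}}{(e^{k\eta}+1)^{2}}\bigr)\big/\bigl(\sum_{k\geq1}\tfrac{g(k)k^{2}e^{k\eta}}{(e^{k\eta}+1)^{2}}\bigr)$; substituting reproduces precisely~\eqref{sigma}, i.e.\ $\varphi''(0)=\sigma_n^2$. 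Therefore $-\mu_nt/\sigma_n$ cancels the linear term of $\varphi(t/\sigma_n)-\varphi(0)$, the quadratic term contributes exactly $\tfrac12\sigma_n^2(t/\sigma_n)^{2}=t^{2}/2$, and Taylor's formula with remainder leaves
\[
  \log M_n(t)=\frac{t^{2}}{2}+\Oh\biggl(\sigma_n^{-3}\sup_{\abs{\xi}\leq\abs{t}/\sigma_n}\abs{\varphi'''(\xi)}\biggr)-\frac12\log\frac{f_{\tau\tau}(r(n,u),u)}{f_{\tau\tau}(\eta,1)}+\Oh\bigl(r^{2\beta/7}\bigr).
\]

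It remains to see that the two middle terms are $o(1)$. For the $f_{\tau\tau}$-ratio, \eqref{eq:f-tau} with $j=2$ combined with the saddle-point relation shows that it equals $1+\Oh(s)+o(1)$ as $s\to0$ — the leading constants being governed by $\Li_{\beta+1}(-e^{s})\to\Li_{\beta+1}(-1)$ and $r(n,e^{s})\to\eta$ — so its logarithm is $o(1)$. For the cubic remainder, differentiating $\varphi'(s)=\sum_{k\geq1}g(k)/(e^{kr(n,e^{s})-s}+1)$ twice more — with $r'(s),r''(s)=\Oh(\eta)$ obtained from the saddle-point relation exactly as $r'(0)$ was — produces sums of the form $\eta^{a}\sum_{k\geq1}k^{\beta-1+a}\chi(k\eta-s)$ with $0\leq a\leq2$ and $\chi$ bounded and exponentially decaying, each of which is $\asymp\eta^{a}\eta^{-\beta-a}=\eta^{-\beta}$; hence $\varphi'''(s)=\Oh(\eta^{-\beta})$ uniformly for $\abs{s}\leq\abs{t}/\sigma_n$. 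Since $\sigma_n^{2}\asymp n^{1/(\alpha+1)}\asymp\eta^{-\beta}$, the cubic remainder is $\Oh(\eta^{\beta/2})=o(1)$. Collecting the three error contributions, $\log M_n(t)=t^{2}/2+o(1)$ uniformly for bounded~$t$, the error being a fixed negative power of~$n$; this is the asserted estimate (and a subsequent application of Curtiss' theorem upgrades $M_n(t)\to e^{t^{2}/2}$ to the asymptotic normality of $\varpi_n$ in Theorem~\ref{thm:main}).

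The step I expect to be the main obstacle is the uniform bound $\varphi'''(s)=\Oh(\eta^{-\beta})$ near $s=0$, i.e.\ control of the third cumulant of the tilted distribution. A crude expansion treating the saddle-point derivatives $r^{(j)}(s)$ as $\Oh(1)$ instead of $\Oh(\eta)$ would only give $\Oh(\eta^{-\beta-2})$, which for $\alpha\geq\tfrac14$ swamps $\sigma_n^{3}$; what saves the day is that after the envelope simplification $\varphi'(s)=\sum_{k\geq1}g(k)/(e^{kr(n,e^{s})-s}+1)$, each power of~$k$ produced by differentiation is accompanied by a factor~$\eta$ coming from $r'(s)$ or $r''(s)$ (which are $\Oh(\eta)$ because a small shift in~$s$ perturbs the saddle point by only $\Oh(\eta)$). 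Establishing those bounds for $r'(s),r''(s)$ via the Mellin-transform and converse-mapping estimates of Section~\ref{sec:outline}, and verifying uniformity in the bounded parameter~$t$ and in the implicit $u$-dependence of the saddle point, is then routine though somewhat lengthy.
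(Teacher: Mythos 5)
Your proposal is correct, and its skeleton is the same as the paper's: start from $M_n(t)=\exp(-\mu_nt/\sigma_n)\,Q_n(e^{t/\sigma_n})/Q_n(1)$, insert the saddle-point asymptotics for $Q_n(u)$, Taylor-expand around $t=0$, recognize the first two coefficients as exactly the quantities in~\eqref{mu} and~\eqref{sigma}, and bound the cubic remainder using $r_u\ll n^{-1/(\beta+1)}$-type estimates coming from implicit differentiation of the saddle-point equation together with the Mellin/converse-mapping bounds for the partial derivatives of~$f$. Where you differ is in the bookkeeping: you expand the composite function $\varphi(s)=nr(n,e^s)+f(r(n,e^s),e^s)$ and exploit the stationarity (envelope) property $n=-f_\tau(r(n,e^s),e^s)$, so that $\varphi'(s)=\sum_{k\geq1}g(k)/(e^{kr(n,e^s)-s}+1)$ and the identities $\varphi'(0)=\mu_n$, $\varphi''(0)=\sigma_n^2$ drop out automatically; the cubic remainder then needs only $r'(s),r''(s)\ll\eta$ and yields $\varphi'''\ll\eta^{-\beta}$, hence an error $\Oh(\eta^{\beta/2})$. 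The paper instead expands $r(n,e^{t/\sigma_n})$ and $f(r(n,e^{t/\sigma_n}),e^{t/\sigma_n})$ separately, which requires explicit formulas and estimates for $r_u,r_{uu},r_{uuu}$ and a longer list of mixed partials (up to $f_{\tau\tau\tau\tau}$ and $f_{uuu\tau}$), and then performs by hand the cancellations that your envelope argument makes automatic (using $n=-f_\tau(\eta,1)$ and $r_u=-f_{u\tau}/f_{\tau\tau}$). Your route is thus a modest economy --- it avoids $r_{uuu}$ and the fourth-order partials --- while the analytic input (Mellin estimates, $B^2$-ratio being $1+\Oh(t/\sigma_n)$, and the $\Oh(r^{2\beta/7})$ error inherited from the $Q_n(u)$ asymptotics) is identical; the bounds $r',r''\ll\eta$ that you defer to ``routine'' work are precisely the estimates the paper carries out in this section, so nothing essential is missing. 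One further point in your favour: you state the final error as a fixed negative power of $n$ (as it must be for Curtiss' theorem to apply), which is clearly the intended reading of the exponent $n^{2\beta/7}$ appearing in the paper's formulation.
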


\begin{proof}
First of all, we observe that
\begin{equation}\label{moment_generating_function}
  \begin{split}
    M_n(t)
    &=\exp\left(-\frac{\mu_nt}{\sigma_n}\right)\frac{Q_n(e^{t/\sigma_n})}{Q_n(1)}\\
    &=\sqrt{\frac{B^2(r(1,n),1)}{B^2(r(n, e^{t/\sigma_n}), e^{t/\sigma_n})}}
      \exp\Bigl(-\frac{\mu_nt}{\sigma_n}+nr(n,e^{t/\sigma_n})\\
    &\qquad +f(r(n, e^{t/\sigma_n}),e^{t/\sigma_n})-nr(n,1)-f(r(n,1),1)+\Oh(r^{2\beta/7})\Bigr).
  \end{split}
\end{equation}
Instead of representing the function with respect to~$u$, we interpret
$r=r(n,u)$ as a function of $n$ and $u$ and use implicit differentiation on
\eqref{n_as_function_of_r} as in
Madritsch and Wagner~\cite{madritsch_wagner2010:central_limit_theorem} and
obtain
\begin{align*}
  r_u=r_u(n,u)=-\frac{f_{u\tau}(r,u)}{f_{\tau\tau}(r,u)}=
  \frac{\sum_{k\geq1}\frac{kg(k)e^{kr}}{(e^{kr}+u)^2}}{\sum_{k\geq1}\frac{uk^2g(k)e^{kr}}{(e^{kr}+u)^2}}
\end{align*}
and similarly
\begin{align*}
  r_{uu}= r_{uu}(n,u) =  \frac{-f_{\tau\tau\tau}(r,u)f_{u\tau}(r,u)^2+2f_{u\tau\tau}(r,u)f_{u\tau}(r,u)f_{\tau\tau}(r,u)-f_{uu\tau}(r,u)f_{\tau\tau}(r,u)^2}{f_{\tau\tau}(r,u)^3}
\end{align*}
as well as
\begin{align*}
r_{uuu}= r_{uuu}(n,u) &= f_{\tau\tau}(r,u)^{-5} \Big(-f_{uuu\tau}(r,u) f_{\tau \tau}(r,u)^4 \\
  &\qquad + \left(3 f_{uu\tau \tau}(r,u) f_{u\tau}(r,u)+3 f_{uu\tau}(r,u)
    f_{u\tau \tau}(r,u)\right) f_{\tau \tau}(r,u)^3 \\
  &\qquad +\left(-3f_{u\tau \tau \tau}(r,u) f_{u\tau}(r,u)^2-6
    f_{u\tau \tau}(r,u)^2 f_{u\tau}(r,u)\right. \\
  &\qquad\qquad \left.-3f_{uu\tau}(r,u) f_{\tau \tau \tau}(r,u) f_{u\tau}(r,u)\right) f_{\tau \tau }(r,u)^2 \\
  &\qquad +\left(f_{\tau \tau \tau \tau}(r,u) f_{u\tau }(r,u)^3+9 f_{u\tau \tau}(r,u)
    f_{\tau \tau \tau }(r,u) f_{u\tau}(r,u)^2\right) f_{\tau \tau
    }(r,u) \\
&\qquad -3 f_{u\tau}(r,u)^3 f_{\tau \tau \tau }(r,u)^2 \Big).
\end{align*}

We now need estimates for the partial derivatives of~$f$. Estimates
for partial derivatives with respect to~$\tau$ follow from our
considerations in Section~\ref{sec:I1}. For derivatives with respect
to~$u$, we take the derivative of the corresponding Mellin transform and
then obtain the estimate via the converse mapping again. Let
us exemplarily illustrate this approach for~$f_{u\tau}$:
By~\eqref{eq:f-tau}, $f_{\tau}$ is given by
\begin{equation*}
  f_{\tau}(\tau, u) = \sum_{\nu=1}^{m}\binom{\beta}{\nu}h_{\beta-\nu, 1}(\tau, u) + \Oh(h_{0, 1}(\tau, u)).
\end{equation*}
The Mellin transform of $h_{\beta - \nu, 1}$ is given by
\begin{equation*}
  H_{\beta - \nu, 1}(s, u) = \zeta(s - \beta + \nu - 1)\Li_{s}(-u)\Gamma(s).
\end{equation*}
Taking the derivative of $H_{\beta - \nu, 1}$ with respect to~$u$ thus
yields
\begin{equation*}
  \pdv{H_{\beta - \nu, 1}}{u}\,\!(s, u) = \frac{1}{u}\zeta(s - \beta + \nu - 1)\Li_{s-1}(-u)\Gamma(s).
\end{equation*}
Consequently, converse mapping implies for $f_{\tau u}$ that
\begin{equation*}
  f_{\tau u}(r, u) = \frac{1}{u}\sum_{\nu=1}^{m}\binom{\beta}{\nu}\Li_{\beta - \nu + 1}(-u)\Gamma(\beta - \nu + 2)r^{-\beta + \nu - 2} + \Oh(r^{-3})\ll r^{-(\beta + 1)}\ll n.
\end{equation*}
In a similar manner, we determine estimates for the other partial derivatives and obtain
\begin{align*}
  f_{u}(r, u) &= \sum_{k\geq 1}\frac{g(k)}{e^{kr} + u} \ll r^{-\beta}\ll n^{\beta/(\beta + 1)},\\
  f_{uu}(r, u) &= -\sum_{k\geq 1}\frac{g(k)}{(e^{kr} + u)^{2}} \ll r^{-\beta}\ll n^{\beta/(\beta + 1)},\\
  f_{\tau\tau}(r,u) &= \sum_{k\geq1} \frac{uk^2g(k)e^{kr}}{(e^{kr}+u)^2} \ll r^{-(\beta+2)}\ll n^{(\beta+2)/(\beta+1)}, \\
  f_{uuu}(r, u) &= \sum_{k\geq 1}\frac{2g(k)}{(e^{kr} + u)^{3}} \ll r^{-\beta}\ll n^{\beta/(\beta + 1)},\\
  f_{uu\tau}(r,u) &= \sum_{k\geq1} \frac{2kg(k)e^{kr}}{(e^{kr}+u)^3} \ll r^{-(\beta +1)}\ll n,\\
  f_{u\tau\tau}(r,u) &= \sum_{k\geq1} \frac{k^2g(k)e^{kr}(e^{kr}-u)}{(e^{kr}+u)^3} \ll r^{-(\beta +2)}\ll n^{(\beta +2)/(\beta +1)}, \\
  f_{\tau\tau\tau}(r,u) &= - \sum_{k\geq1} \frac{uk^3g(k)e^{kr}(e^{kr}-u)}{(e^{kr}+u)^3} \ll r^{-(\beta +3)}\ll n^{(\beta +3)/(\beta +1)}, \\
  f_{uuu\tau}(r,u) &= - \sum_{k\geq1} \frac{6kg(k)e^{kr}}{(e^{kr}+u)^4} \ll r^{-(\beta +1)}\ll n, \\
  f_{uu\tau\tau}(r,u) &= - \sum_{k\geq1} \frac{2k^2g(k)e^{kr}(2e^{kr}-u)}{(e^{kr}+u)^4} \ll r^{-(\beta +2)}\ll n^{(\beta +2)/(\beta +1)}, \\
  f_{u\tau\tau\tau}(r,u) &= - \sum_{k\geq1} \frac{k^3g(k)e^{kr}(e^{2kr}-4ue^{kr}+u^2)}{(e^{kr}+u)^4} \ll r^{-(\beta +3)} \ll n^{(\beta +3)/(\beta +1)}, \\
  f_{\tau\tau\tau\tau}(r,u) &= \sum_{k\geq1} \frac{uk^4g(k)e^{kr}(e^{2kr}-4ue^{kr}+u^2)}{(e^{kr}+u)^4} \ll r^{-(\beta +4)} \ll n^{(\beta +4)/(\beta +1)}.
\end{align*}
From these estimates it follows that $r_u,r_{uu},r_{uuu}\ll n^{-1/(\beta + 1)}$
uniformly in $u$. Expanding $r(n,e^{t/\sigma_n})$ and
$f(r(e^{t/\sigma_n},n),e^{t/\sigma_n})$ around $t=0$ yields
\[
  r(n,e^{t/\sigma_n}) - r(n,1)
  =r_u(n,1)\cdot \frac{t}{\sigma_n}
  +\frac{r_u(n,1)+r_{uu}(n,1)}{2}\cdot\left(\frac{t}{\sigma_n}\right)^2
  +\Oh\left(n^{-1/(\beta +1)}\frac{t^3}{\sigma_n^3}\right)
\]
and
\begin{align*}
  &f(r(e^{t/\sigma_n},n)e^{t/\sigma_n}) - f(r(1,n),1)
  =\left(f_{\tau}(r(n,1),1)r_u(n,1)+f_u(r(n,1),1)\right)\cdot\frac{t}{\sigma_n}\\
  &\quad+\frac12\left(f_{\tau}(r(n,1),1)(r_u(n,1) + r_{uu}(n,1))
    +f_{\tau\tau}(r(n,1),1)r_u(n,1)^2+2f_{u\tau}(r(n,1),1)r_u(n,1)\right.\\
  &\quad\quad+\left.f_u(r(n,1),1)+f_{uu}(r(n,1),1)\right)
    \cdot\left(\frac{t}{\sigma_n}\right)^2
    +\Oh\left(n^{\beta/(\beta +1)}\frac{t^3}{\sigma_n^3}\right),
\end{align*}
respectively.

By plugging these expansions into the exponential of
\eqref{moment_generating_function} we get
\begin{align*}
  &\bigl(nr_u(n,1)+f_\tau(\eta,1)r_u(n,1)+f_u(\eta,1)-\mu_n\bigr)
    \cdot\frac{t}{\sigma_n}\\
  &\quad+\frac12\bigl(n(r_u(n,1)+r_{uu}(n,1)\bigr)+f_{\tau}(\eta,1)\bigl(r_u(n,1)+r_{uu}(n,1)\bigr)
    +f_{\tau\tau}(\eta,1)r_u(n,1)^2\\
  &\quad\quad + 2f_{u\tau}(\eta,1)r_{u}(n,1) + f_{u}(\eta, 1) + f_{uu}(\eta, 1)\bigr)
    \cdot\left(\frac{t}{\sigma_n}\right)^2
    +\Oh\left(n^{\beta/(\beta + 1)}\frac{t^3}{\sigma_n^3}+n^{2\beta/7}\right),
\end{align*}
where we wrote $\eta=r(n,1)$ for short. Recalling that $n=-f_\tau(\eta,1)$ and
that $r_u(n,1)=-\frac{f_{u\tau}(\eta,1)}{f_{\tau\tau}(\eta,1)}$, we can simplify
the last expression in order to obtain
\begin{multline*}
  \left(f_u(\eta,1)-\mu_n\right)\cdot\frac{t}{\sigma_n}
  +\frac12\left(f_u(\eta,1)+f_{uu}(\eta,1)
  -\frac{f_{u\tau}(\eta,1)^2}{f_{\tau\tau}(\eta,1)}\right)
  \cdot\left(\frac{t}{\sigma_n}\right)^2
  +\Oh\biggl(n^{\beta/(\beta + 1)}\frac{t^3}{\sigma_n^3}+n^{2\beta/7}\biggr).
\end{multline*}
In a similar way we get that
\[\frac{B^2(r(n,1),1)}{B^2(r(n,e^{t/\sigma_n}), e^{t/\sigma_n})}
  =1+\Oh\left(\frac{t}{\sigma_n}\right).\]
Thus, we obtain the following asymptotic formula for the moment-generating
function in~\eqref{moment_generating_function}:
\begin{multline*}
  M_n(t)=\exp\biggl(\bigl(f_u(\eta,1)-\mu_n\bigr)\cdot\frac{t}{\sigma_n}
    +\frac12\left(f_u(\eta,1)+f_{uu}(\eta,1)
    -\frac{f_{u\tau}(\eta,1)^2}{f_{\tau\tau}(\eta,1)}\right)
    \cdot\left(\frac{t}{\sigma_n}\right)^2\\
    +\Oh\biggl(\frac{t}{\sigma_n}+n^{\frac{\beta}{\beta + 1}}\frac{t^3}{\sigma_n^3}+n^{-\beta + 9\beta/7}\biggr)\biggr).
\end{multline*}
Thus, our choice of $\mu_{n}$ and $\sigma_{n}$ in~\eqref{mu} and~\eqref{sigma} is justified. Mellin transform technique finally shows that
\begin{equation}\label{eq:mu-explicit}
  \mu_n
      \sim -\beta\Li_{\beta}(-1)\Gamma(\beta)\bigl(-\beta\Li_{\beta+1}(-1)\Gamma(\beta+1)\bigr)^{-\beta/(\beta+1)}n^{\beta/(\beta + 1)}
\end{equation}
and
\begin{multline}\label{eq:sigma-explicit}
  \sigma_n^2 \sim \mu_{n} + \Biggl(\frac{-\beta\Li_{\beta-1}(-1)\Gamma(\beta)}{\bigl(-\beta\Li_{\beta+1}(-1)\Gamma(\beta+1)\bigr)^{\beta/(\beta+1)}}\\
  + \frac{(\beta\Li_{\beta}(-1)\Gamma(\beta + 1)\bigr)^{2}}{\beta\Li_{\beta+1}(-1)\Gamma(\beta+2)\bigl(-\beta\Li_{\beta+1}(-1)\Gamma(\beta+1)\bigr)^{\beta/(\beta+1)}}\Biggr)n^{\beta/(\beta + 1)}.
\end{multline}
Plugging everything into the asymptotic formula for the moment-generating function
we have
\begin{align*}
  M_n(t)&=\exp\left(\frac{t^2}{2}+\Oh\bigl(n^{-\beta/(2\beta + 2)}t + n^{-\beta/(2\beta + 2)}t^{3} + n^{2\beta/7}\bigr)\right)\\
        &= \exp\left(\frac{t^2}{2}+\Oh\bigl(n^{-\beta/(2\beta + 2)}(t + t^{3}) + n^{2\beta/7}\bigr)\right)\\
  &= \exp\left(\frac{t^{2}}{2} + \Oh(n^{2\beta/7})\right)
\end{align*}
for bounded~$t$.
\end{proof}

By the previous lemma and Curtiss' theorem~\cite{curtiss_1942:moment_generating_functions}, it follows that the
distribution of $\varpi_{n}$ is indeed asymptotically normal.
For the remaining parts, we again follow Hwang~\cite{hwang2001:limit_theorems_number} and obtain for $t = o(n^{\alpha/(6\alpha +6)})$ that 
\begin{align*}
  \mathbb{P}\left(\frac{\varpi - \mu_n}{\sigma_n}\geq x\right)
  &\leq e^{-tx}M_n(t)\\
  &= e^{-tx+t^2/2}\left(1+\Oh\bigl(n^{-\beta/(2\beta + 2)}(t + t^{3}) + n^{2\beta/7}\bigr)\right)
\end{align*}
by Chernov's inequality.

Finally, let $T=n^{\beta/(6\beta+6)}$. Then for $x\leq T$ we set $t=x$ and obtain
\begin{align*}
  \mathbb{P}\left(\frac{\varpi - \mu_n}{\sigma_n}\geq x\right)
  \leq e^{-x^2/2}\left(1+\Oh\left((\log n)^{-3}\right)\right).
\end{align*}
For $x\geq T$ we set $t=T$ yielding
\begin{align*}
  \mathbb{P}\left(\frac{\varpi - \mu_n}{\sigma_n}\geq x\right)
  \leq e^{-Tx/2}\left(1+\Oh\left((\log n)^{-3}\right)\right).
\end{align*}
We can estimate the probability $\mathbb{P}\bigl(\frac{\varpi -
\mu_n}{\sigma_n}\leq -x\bigr)$ in a similar way.

\section*{Acknowledgment}
The first author is supported by the Austrian Science Fund (FWF), project
W\,1230. The second author is supported by project ANR-18-CE40-0018 funded by
the French National Research Agency. The third author is supported by the
Austrian Science Fund (FWF), project F\,5510-N26 within the Special Research
Area “Quasi-Monte Carlo Methods: Theory and Applications” and project
I\,4406-N.

Major parts of the present paper were established when the first
author was visiting the Institut Élie Cartan at the
Université de Lorraine, France. He thanks the institution for
its hospitality.

Finally, the authors thank the reviewer for carefully
reading the manuscript and for the helpful suggestions. Her/his valuable
comments improved the quality of the article.

\bibliography{literatur}

\end{document}